\newtheorem{theorem}{Theorem}[section]
\newtheorem{proposition}[theorem]{Proposition}
\newtheorem{conjecture}[theorem]{Conjecture}
\newtheorem{corollary}[theorem]{Corollary}
\newtheorem{lemma}[theorem]{Lemma}
\newtheorem*{claim*}{Claim}
\theoremstyle{definition}
\newtheorem{remark}[theorem]{Remark}
\newtheorem{example}[theorem]{Example}
\numberwithin{equation}{section}
\author{Andrew Weinfeld}
\begin{document}

\title{Bases for Quotients of Symmetric Polynomials}

\maketitle

\begin{abstract}
  We create several families of bases for the symmetric polynomials.  From these bases we prove that certain Schur symmetric polynomials form a basis for quotients of symmetric polynomials that generalize the cohomology and the quantum cohomology of the Grassmannian.  Our work also provides an alternative proof of a result due to Grinberg.
\end{abstract}

\section{Introduction}

In \cite{basisquot} Grinberg studied the quotient ring
\begin{align*}
\mathcal{S}/\langle h_{n-k+1}-a_1,\dots,h_n-a_k \rangle
= \mathcal{S}/\mathcal{I}
\end{align*}
where $\mathcal{S}$ is the ring of symmetric polynomials in $k$ variables over some commutative ring $\mathcal{R}$, $h_{n-k+i}$ is the $(n-k+i)^{th}$ complete homogeneous symmetric polynomial in $\mathcal{S}$, the $a_i$ are arbitrary members of $\mathcal{S}$ satisfying $\deg(a_i)<n-k+i$, and $\mathcal{I}$ is the ideal of $\mathcal{S}$ given by
$$
\mathcal{I} = \langle h_{n-k+1}-a_1,\dots,h_n-a_k \rangle.
$$

This quotient is of interest because it generalizes the ordinary and quantum cohomologies of the Grassmannian.  When $\mathcal{R}=\mathbb{Z}$ and $a_i=0$ the quotient $\mathcal{S}/\mathcal{I}$ becomes isomorphic to $\text{H}^*\,(Gr(k,n))$, the cohomology of the Grassmannian.  When $\mathcal{R}=\mathbb{Z}[q]$, $a_i=0$ for $i<k$, and $a_k=(-1)^{k+1}q$ the quotient $\mathcal{S}/\mathcal{I}$ becomes isomorphic to $\text{QH}^*(\,Gr(k,n))$, the quantum cohomology of the Grassmannian (see \cite{postnikov}).

Grinberg established (see \cite[Theorem 2.7]{basisquot}) that
\begin{align}
\label{equIntroSBasis}
\{s_\lambda\,&|\,\lambda\in P_{k,n-k}\}
\end{align}
is an $\mathcal{R}$-basis for the quotient $\mathcal{S}/\mathcal{I}$,
where $s_\lambda$ is a Schur polynomial and $P_{k,n-k}$ is the set of partitions with at most $k$ parts, each at most $n-k$.  In his proof, Grinberg first uses the Jacobi-Trudi identity to show that (\ref{equIntroSBasis}) spans $\mathcal{S}/\mathcal{I}$ and then computes an explicit Gr{\"o}bner basis for a related quotient to deduce that $\text{rank}_\mathcal{R}(\mathcal{S}/\mathcal{I})=\#P_{k,n-k}$.  Grinberg also proved (see \cite[Theorem 9.9]{basisquot} and \cite[Theorem 9.11]{basisquot}) that
\begin{align*}
\{m_\lambda\,&|\,\lambda\in P_{k,n-k}\}
\\
\text{and }\{h_\lambda\,&|\,\lambda\in P_{k,n-k}\}
\end{align*}
are $\mathcal{R}$-bases for $\mathcal{S}/\mathcal{I}$, where $m_\lambda$ is a monomial symmetric polynomial.

In this paper we will study the quotient ring
$$
\mathcal{S}/\langle p_{n-k+1}-a_1,\dots,p_n-a_k \rangle
= \mathcal{S}/\mathcal{J}
$$
for $\mathcal{R}$ a commutative $\mathbb{Q}$-algebra, where the $h_{n-k+i}$'s from the definition of $\mathcal{S}/\mathcal{I}$ are replaced with $p_{n-k+i}$'s, the power sum symmetric polynomials, and $\mathcal{J}$ is the ideal of $\mathcal{S}$ given by 
$$
\mathcal{J}=\langle p_{n-k+1}-a_1,\dots,p_n-a_k \rangle.
$$

Grinberg's methods for $\mathcal{S}/\mathcal{I}$ cannot be applied to $\mathcal{S}/\mathcal{J}$ because there is no analogue of the Jacobi-Trudi identity for the $p_{n-k+i}$'s and there is no equivalent of the Gr{\"o}bner basis of \cite[Proposition 4.1]{basisquot} for our ideal $\mathcal{J}$.  Therefore, we will need to approach the power sum symmetric polynomials from a different perspective. In particular, we will construct bases for $\mathcal{S}$ from which bases for $\mathcal{S}/\mathcal{J}$ can be deduced. This method can also be used to provide an alternative of Grinberg's result that (\ref{equIntroSBasis}) is a basis for $\mathcal{S}/\mathcal{I}$, without using other quotients or Gr{\"o}bner bases.

Our main results are Theorem~\ref{thmMain} and Corollary~\ref{corMainSME}.

\begin{theorem}
\label{thmMain}
Let $Q_{n-k+1,n}$ be the set of partitions whose parts are weakly between $n-k+1$ and $n$. Then:

When $\mathcal{R}$ is a commutative $\mathbb{Q}$-algebra, the set
\begin{align}
\label{equSPBasis}
\{s_\lambda p_\mu \,|\, \lambda\in P_{k,n-k}\text{, } \mu\in Q_{n-k+1,n}\}
\end{align}
is an $\mathcal{R}$-basis for $\mathcal{S}$.

When $\mathcal{R}$ is any commutative ring, the set
\begin{align}
\label{equSHBasis}
\{s_\lambda h_\mu \,|\, \lambda\in P_{k,n-k}\text{, } \mu\in Q_{n-k+1,n}\}
\end{align}
is an $\mathcal{R}$-basis for $\mathcal{S}$.

Furthermore, the $s_\lambda$'s in Theorem~\ref{thmMain} may be replaced by $m_\lambda$'s or by $e_{\lambda'}$'s, where $e_\lambda$ is an elementary symmetric polynomial and $\lambda'$ denotes the conjugate of $\lambda$.
\end{theorem}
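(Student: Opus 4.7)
My plan is to establish the $\{s_\lambda h_\mu\}$ basis \eqref{equSHBasis} first, since it holds over any commutative ring, and then derive the $\{s_\lambda p_\mu\}$ basis over $\mathbb{Q}$-algebras and the $m_\lambda / e_{\lambda'}$ substitutions from it. First I would verify that both sides have matching Hilbert series via the identity
\[
\binom{n}{k}_q \cdot \prod_{i=n-k+1}^{n}\frac{1}{1-q^{i}}
\;=\;\prod_{i=1}^{k}\frac{1-q^{n-k+i}}{1-q^{i}}\cdot\prod_{i=n-k+1}^{n}\frac{1}{1-q^{i}}
\;=\;\prod_{i=1}^{k}\frac{1}{1-q^{i}},
\]
the right-hand side being the Hilbert series of $\mathcal{S}$ as a graded $\mathcal{R}$-module. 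With ranks matching in each degree and each graded piece of $\mathcal{S}$ $\mathcal{R}$-free of finite rank, it suffices to show that \eqref{equSHBasis} spans $\mathcal{S}$ in each degree.

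For spanning, I would expand each Schur polynomial $s_\nu$ (with $\nu$ having at most $k$ parts) using the Jacobi--Trudi identity
\[
s_\nu \;=\; \det\bigl(h_{\nu_i-i+j}\bigr)_{i,j=1}^{k}
\;=\;\sum_{\sigma\in S_k}\operatorname{sgn}(\sigma)\prod_{i=1}^{k}h_{\nu_i-i+\sigma(i)}.
\]
Each entry $h_m$ in a product is handled by cases: $m\in[0,n-k]$ gives $h_m=s_{(m)}$ with $(m)\in P_{k,n-k}$; $m\in[n-k+1,n]$ makes $h_m$ a basis atom on the $h_\mu$ side; and $m>n$ is reduced by Newton's identity $h_m=\sum_{j=1}^{k}(-1)^{j+1}e_j\,h_{m-j}$ (where $e_j=s_{(1^j)}\in P_{k,n-k}$ for $j\le k$), recursing on $m$. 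After these substitutions, each Jacobi--Trudi term becomes a product of small Schur polynomials indexed in $P_{k,n-k}$ multiplied by an $h_\mu$ with $\mu\in Q_{n-k+1,n}$; applying Pieri's rule combines the small-Schur factors into a single $s_\lambda$ belonging to (or reducible into) $P_{k,n-k}$.

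For the power-sum version, Jacobi--Trudi is unavailable (as noted in the introduction), so I would instead use the $h$-basis just established and change basis on the $\mu$-side: Newton's identity $p_r=r\,h_r+(\text{polynomial in }h_1,\ldots,h_{r-1})$ produces an invertible transition matrix between $\{h_\mu:\mu\in Q_{n-k+1,n}\}$ and $\{p_\mu:\mu\in Q_{n-k+1,n}\}$ with diagonal entries that are units in any $\mathbb{Q}$-algebra, transporting the basis property. The $m_\lambda / e_{\lambda'}$ substitutions are immediate from the standard fact that the transition matrices among $\{s_\lambda\}$, $\{m_\lambda\}$, $\{e_{\lambda'}\}$ restricted to $\lambda\in P_{k,n-k}$ are unitriangular in dominance order with unit diagonal (and that dominance-smaller partitions below a $\lambda\in P_{k,n-k}$ remain in $P_{k,n-k}$).

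The main obstacle will be controlling the Pieri merging in the spanning argument: after Jacobi--Trudi and Newton, each term is a product of small Schur polynomials with indices in $P_{k,n-k}$ times a basis atom $h_\mu$, and the small-Schur product must be expanded into a single $s_\lambda$. Verifying that this expansion stays inside the span of the proposed basis (so that the recursion closes without circularity), together with careful treatment of boundary regimes such as $n=k$ and partitions $\nu$ with several equal top parts, is where most of the combinatorial care will go.
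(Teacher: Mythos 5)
Your Hilbert series computation matches the paper's rank count exactly, and your use of unitriangularity (restricted to $P_{k,n-k}$, which is a down-set for dominance) to swap $s_\lambda$ for $m_\lambda$ or $e_{\lambda'}$ is precisely the paper's argument (its Lemmas~\ref{lemSinMTriangularityPkn-k} and~\ref{lemEinSTriangularityPkn-k}). But two parts of your plan diverge from the paper and contain gaps.

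First, the way you propose to carry out the spanning argument for \eqref{equSHBasis} is more delicate than it needs to be and courts circularity. You want to expand every factor $h_m$ in each Jacobi--Trudi term, rewrite the small ones as Schur polynomials in $P_{k,n-k}$, and then merge them by Pieri/Littlewood--Richardson. The merged products $\prod_j s_{\lambda^{(j)}}$ with $\lambda^{(j)}\in P_{k,n-k}$ do \emph{not} stay inside $\{s_\lambda:\lambda\in P_{k,n-k}\}$ (e.g.\ $s_{(n-k)}^2$ produces $s_{(2(n-k))}$, which escapes the box), so you would have to recurse on those Schur polynomials again via Jacobi--Trudi --- and you need some strictly decreasing quantity to make that recursion terminate. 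The paper's proof of Lemma~\ref{lemMixedBasisSHContainsSlambda} avoids this entirely: when $\lambda_1\geq n-k+1$, it factors out only the single factor $h_{\lambda_1+\sigma(1)-1}$ (whose index is automatically $\geq\lambda_1\geq n-k+1$), reduces that one factor via the identity \eqref{equNewtonGirardhe} to an $\mathcal{S}$-linear combination of $h_{n-k+1},\dots,h_n$ (Lemma~\ref{lemReducingHViaNewtonGirard}), and treats the entire remaining product as a bulk coefficient of degree strictly less than $i$, on which it inducts. No Pieri merging is ever needed. Your plan can likely be repaired by a careful degree induction, but as written the termination argument is exactly the missing piece you already flagged.

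Second, and more seriously, the proposed derivation of the $p$-basis \eqref{equSPBasis} from the $h$-basis does not work as stated. You claim Newton's identity $p_r=r\,h_r+(\text{polynomial in }h_1,\dots,h_{r-1})$ gives an invertible transition matrix between $\{h_\mu:\mu\in Q_{n-k+1,n}\}$ and $\{p_\mu:\mu\in Q_{n-k+1,n}\}$. But the correction terms in Newton's identity involve $h_1,\dots,h_{r-1}$, most of which have index below $n-k+1$, so $p_r$ does \emph{not} lie in $\operatorname{span}_{\mathcal{R}}\{h_\mu:\mu\in Q_{n-k+1,n}\}$; those two families span different $\mathcal{R}$-submodules of $\mathcal{S}$, and there is no transition matrix ``on the $\mu$-side'' alone. (Concretely, with $k=2$, $n=3$: $p_2 = 2h_2 - h_1^2$, and $h_1\notin\{h_2,h_3,\dots\}$. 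Working it out in the full $\{s_\lambda h_\mu\}$ basis gives $p_2 = h_2 - s_{(1,1)}$, which has an $s_{(1,1)}$ contribution on the $\lambda$-side.) Any correct version of this argument would have to track how the $s_\lambda$-factor changes as well, and establish triangularity of the full transition from $\{s_\lambda p_\mu\}$ to $\{s_\nu h_\tau\}$ under some genuine partial order --- a nontrivial claim you have not justified. The paper does not attempt this route at all: it proves the $p$-version \emph{independently} (in fact first), establishing $\{m_\lambda p_\mu\}$ as a basis by expanding $m_\lambda$ in the power-sum basis of $\Lambda$ and then reducing large $p_t$ via the Newton--Girard identity \eqref{equNewtonGirardpe} (Proposition~\ref{propMixedBasisMlambdaPmu}, Lemmas~\ref{lemReducingPViaNewtonGirard} and~\ref{lemMixedBasisMPContainsMlambda}), and only afterwards switching $m_\lambda\to s_\lambda$ by unitriangularity. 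You should either adopt that parallel-and-independent structure or supply the missing triangularity argument for your $h\to p$ transfer.
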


\begin{corollary}
\label{corMainSME}
The sets
\begin{align*}
\{s_\lambda \,&|\, \lambda\in P_{k,n-k} \},
\\
\{m_\lambda \,&|\, \lambda\in P_{k,n-k} \},
\\
\textnormal{and } \{e_{\lambda'} \,&|\, \lambda\in P_{k,n-k} \}
\end{align*}
are $\mathcal{R}$-bases for $\mathcal{S}/\mathcal{J}$ and for $\mathcal{S}/\mathcal{I}$.
\end{corollary}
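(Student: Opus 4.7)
The plan is to derive Corollary~\ref{corMainSME} from Theorem~\ref{thmMain} through separate spanning and linear independence arguments, with the independence passing through the unperturbed homogeneous ideal $\tilde{\mathcal{J}} := \langle p_{n-k+1}, \ldots, p_n\rangle$. All six claims are parallel, so I describe the proof for the $\{s_\lambda\}$-basis of $\mathcal{S}/\mathcal{J}$; the $\{m_\lambda\}$ and $\{e_{\lambda'}\}$ variants and the $\mathcal{S}/\mathcal{I}$ case follow once one substitutes the corresponding form of Theorem~\ref{thmMain} (e.g., the $\{s_\lambda h_\mu\}$ basis for $\mathcal{I}$). For spanning, I expand any $x \in \mathcal{S}$ as $\sum c_{\lambda,\mu}\, s_\lambda p_\mu$ via Theorem~\ref{thmMain}; whenever $\mu \neq \emptyset$ has largest part $n-k+j$, the congruence $p_\mu \equiv a_j\, p_{\mu \setminus (n-k+j)} \pmod{\mathcal{J}}$ strictly reduces polynomial degree because $\deg a_j < n-k+j$. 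Induction on degree then places $x + \mathcal{J}$ in the $\mathcal{R}$-span of $\{s_\lambda : \lambda \in P_{k,n-k}\}$.

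The homogeneous stepping stone is immediate from Theorem~\ref{thmMain}: multiplication by $p_{n-k+i}$ sends each basis element $s_\nu p_{\mu_0}$ to another basis element $s_\nu p_{\mu_0 \cup (n-k+i)}$ with nonempty $\mu$-part, and every basis element with $\mu \neq \emptyset$ arises this way. Hence $\tilde{\mathcal{J}}$ has $\mathcal{R}$-basis $\{s_\lambda p_\mu : \lambda \in P_{k,n-k},\ \mu \in Q_{n-k+1,n} \setminus \{\emptyset\}\}$, giving a direct sum decomposition $\mathcal{S} = V \oplus \tilde{\mathcal{J}}$ where $V := \operatorname{span}_\mathcal{R}\{s_\lambda : \lambda \in P_{k,n-k}\}$. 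In particular $\mathcal{S}/\tilde{\mathcal{J}}$ is $\mathcal{R}$-free of rank $|P_{k,n-k}|$ on $\{s_\lambda\}_{\lambda \in P_{k,n-k}}$, settling the corollary for this auxiliary ideal.

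To pass from $\tilde{\mathcal{J}}$ to $\mathcal{J}$, I filter $\mathcal{S}$ by polynomial degree. Since $\deg a_i < n-k+i$, the initial form of $p_{n-k+i} - a_i$ equals $p_{n-k+i}$, so $\tilde{\mathcal{J}} \subseteq \operatorname{in}(\mathcal{J})$ and there is a surjection $\mathcal{S}/\tilde{\mathcal{J}} \twoheadrightarrow \operatorname{gr}(\mathcal{S}/\mathcal{J})$. The hardest step I foresee is the reverse inclusion $\operatorname{in}(\mathcal{J}) \subseteq \tilde{\mathcal{J}}$: given $y \in \mathcal{J}$ of degree $D$ with representation $y = \sum_i r_i(p_{n-k+i} - a_i)$, one must ``trim'' it so that $\deg r_i \leq D - (n-k+i)$ for all $i$, whereupon $y_D = \sum_i (r_i)_{D-(n-k+i)} p_{n-k+i} \in \tilde{\mathcal{J}}$ because the $r_i a_i$ contributions at degree $D$ vanish. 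Any higher-degree contribution in the original representation forces a top-degree syzygy $\sum_i (r_i)_{\text{top}}\, p_{n-k+i} = 0$; since $\{p_{n-k+1}, \ldots, p_n\}$ is a regular sequence (a consequence of the freeness of $\mathcal{S}/\tilde{\mathcal{J}}$ inside the polynomial ring $\mathcal{R}[p_1,\ldots,p_k]$ established in the previous paragraph), this syzygy is Koszul, and the antisymmetry of Koszul syzygies lets one modify the $r_i$ without changing $y$ while strictly lowering the maximum of $\deg r_i + n-k+i$. Iterating produces a trimmed representation, so $\operatorname{in}(\mathcal{J}) = \tilde{\mathcal{J}}$ and $\operatorname{gr}(\mathcal{S}/\mathcal{J}) \cong \mathcal{S}/\tilde{\mathcal{J}}$ is $\mathcal{R}$-free of rank $|P_{k,n-k}|$. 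The standard filtered-to-graded lifting (a filtered module whose associated graded is free is itself free of the same rank, with any lift of a graded basis giving a basis of the module) then promotes this to freeness of $\mathcal{S}/\mathcal{J}$, and together with the spanning step identifies $\{s_\lambda\}_{\lambda \in P_{k,n-k}}$ as the asserted basis.
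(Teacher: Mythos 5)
Your proof is correct in outline but takes a genuinely different and substantially heavier route than the paper's. The paper (Corollaries~\ref{corMixedBasesSMElambdaPmuAi}, \ref{corSMEWeinQuoBasis}, \ref{corMixedBasesSMElambdaHmuAi}, \ref{corSMEGrinQuoBasis}) simply perturbs the basis $\{s_\lambda p_\mu\}$ of Theorem~\ref{thmMain} to $\bigl\{s_\lambda \prod_{j}(p_{\mu_j}-b_{\mu_j})\bigr\}$, which is still a basis because the transition matrix is unitriangular with respect to total degree; it then observes that $\mathcal{J}$ is exactly the $\mathcal{R}$-span of the perturbed basis vectors with $\ell(\mu)\geq 1$, so the complementary vectors $\{s_\lambda\}_{\lambda\in P_{k,n-k}}$ project to a basis of $\mathcal{S}/\mathcal{J}$ in a single step, with spanning and independence handled at once and no commutative-algebra machinery. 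You instead separate the two: a degree-reduction induction for spanning (fine), and an associated-graded argument for independence via $\operatorname{in}(\mathcal{J})=\tilde{\mathcal{J}}$ plus filtered-to-graded lifting. The point you flag as hardest --- that $p_{n-k+1},\dots,p_n$ is a regular sequence on $\mathcal{S}$ so that top-degree syzygies are Koszul and can be trimmed --- is true, but ``freeness of $\mathcal{S}/\tilde{\mathcal{J}}$'' is not on its own the right justification over an arbitrary $\mathbb{Q}$-algebra $\mathcal{R}$; the clean reading of Theorem~\ref{thmMain} is that the monomials $p_\mu$ ($\mu\in Q_{n-k+1,n}$) are $\mathcal{R}$-linearly independent, so $\mathcal{B}:=\mathcal{R}[p_{n-k+1},\dots,p_n]\subseteq\mathcal{S}$ is an honest polynomial ring and $\mathcal{S}$ is a free $\mathcal{B}$-module with basis $\{s_\lambda\}_{\lambda\in P_{k,n-k}}$, whence regularity of the sequence on $\mathcal{S}$ follows from its obvious regularity on $\mathcal{B}$. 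With that repair your Koszul-trimming and lifting go through, and the $m_\lambda$, $e_{\lambda'}$, and $\mathcal{S}/\mathcal{I}$ variants follow by the substitutions you indicate. Net comparison: your argument is sound but invokes Koszul syzygies, initial ideals, and graded lifting; the paper's perturbed-basis trick does the whole job with one triangular change of basis and also yields an explicit $\mathcal{R}$-basis of the ideal $\mathcal{J}$ itself as a by-product.
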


The rest of this paper is structured as follows.  In Section 2 we review the theory of symmetric polynomials.  In Section 3 we will prove Theorem~\ref{thmMain} for the $p_i$'s, and in Section 4 we will prove Corollary~\ref{corMainSME} for $\mathcal{S}/\mathcal{J}$.  In Section 5 we will prove Theorem~\ref{thmMain} for the $h_i$'s, and in Section 6 we will prove Corollary~\ref{corMainSME} for $\mathcal{S}/\mathcal{I}$.  In Section 7 we will state a couple of conjectures.

\section{Preliminaries}

In this section we will review the theory of symmetric polynomials. The reader may wish to consult \cite[Section 7]{enum2} or \cite{macdonald} for a more detailed treatment.

Let $\mathcal{R}$ be a commutative ring. Fix some positive integer $k$.  We set $\mathcal{S}=\mathcal{R}[x_1,\dots,x_k]^{\mathfrak{S}_k}$ to be the set of elements of $\mathcal{R}[x_1,\dots,x_k]$ that are invariant under all permutations of $x_1,\dots,x_k$, that is, $\mathcal{S}$ is the set of symmetric polynomials in $\mathcal{R}[x_1,\dots,x_k]$.  Then $\mathcal{S}$ is a commutative $\mathcal{R}$-algebra.
 
A partition $\lambda=(\lambda_1,\lambda_2,\dots,\lambda_{\ell(\lambda)})$ is a weakly decreasing sequence of positive integers of length $\ell(\lambda)$.  We set $\lambda_i=0$ for $i>\ell(\lambda)$. We say that $|\lambda|=\sum_{i\geq 1} \lambda_i$.  The conjugate of a partition $\lambda'$ is the partition with parts $\lambda'_i = \#\{\lambda_j\geq i\}$.  Note that $|\lambda'|=|\lambda|$.  Set:
\begin{align*}
P &= \{\lambda \textnormal{ a partition}\}
\\
P(i) &= \{\lambda \in P \textnormal{ and } |\lambda|=i\}
\\
P_k &= \{\lambda \in P \textnormal{ and } \ell(\lambda)\leq k\}
\\
P_k(i) &= \{\lambda \in P_k\textnormal{ and }|\lambda|=i\}
\\
P_{k,n-k} &= \{\lambda\in P_k \textnormal{ and }\lambda_1\leq n-k\}
\\
P_{k,n-k}(i) &= \{\lambda \in P_{k,n-k}\textnormal{ and }|\lambda|=i\}
\\
Q_{n-k+1,n} &= \{\lambda\in P \textnormal{ and }n\geq \lambda_1\geq\cdots\geq\lambda_{\ell(\lambda)}\geq n-k+1\}
\\
Q_{n-k+1,n}(i) &= \{\lambda \in Q_{n-k+1,n}\textnormal{ and }|\lambda|=i\}
\end{align*}
Note that $\lambda'\in P_k\iff \lambda_1\leq k$, and that members of $Q_{n-k+1,n}$ could have any length.

The majorization ordering $\lambda \unlhd \mu$ is a poset structure on $P$ and is given by
$$
\lambda \unlhd \mu \iff \forall j: \lambda_1+\cdots+\lambda_j \leq \mu_1+\cdots+\mu_j.
$$

Recall the following well-known families of symmetric polynomials.

\begin{itemize}
    \item The monomial symmetric polynomials,
    $$m_\lambda = \sum_{\substack{a_1,\dots,a_k\text{ a distinct} \\ \text{permutation of }\lambda_1,\dots,\lambda_k}} x_1^{a_1}\cdots x_k^{a_k}$$
    when $\ell(\lambda)\leq k$. We set $m_\emptyset=1$ and $m_\lambda=0$ when $\ell(\lambda)>k$.
    \item The elementary symmetric polynomials,
    \begin{align*}e_i&=\sum_{1\leq j_1 <\cdots < j_i \leq k} x_{j_1}\cdots x_{j_i}
    \\ \text{and } e_\lambda &= e_{\lambda_1}\cdots e_{\lambda_{\ell(\lambda)}}.\end{align*}
    We set $e_0=e_\emptyset=1$.  Note that $e_i=0$ when $i>k$.
    \item The complete homogeneous symmetric polynomials,
    \begin{align*}h_i&=\sum_{1\leq j_1 \leq \cdots \leq j_i \leq k} x_{j_1}\cdots x_{j_i}
    \\ \text{and } h_\lambda &= h_{\lambda_1}\cdots h_{\lambda_{\ell(\lambda)}}.\end{align*}
    We set $h_0=h_\emptyset=1$.
    \item The Schur symmetric polynomials, $$s_\lambda = \frac{\det(x_j^{\lambda_i+k-i})_{i,j=1}^k}{\det(x_j^{k-i})_{i,j=1}^k}$$
    when $\ell(\lambda)\leq k$.  We set $s_\emptyset=1$ and $s_\lambda=0$ when $\ell(\lambda)>k$.
    \item The power sum symmetric polynomials,
    \begin{align*}p_i&= \sum_{j=1}^k x_j^i
    \\ \text{and } p_\lambda &= p_{\lambda_1}\cdots p_{\lambda_{\ell(\lambda)}}\end{align*}
    for $i\geq1$.  We set $p_\emptyset=1$.
\end{itemize}
Each of the above families contains a basis for $\mathcal{S}$, specifically:
\begin{itemize}
    \item $m_\lambda$ for $\lambda\in P_k$
    \item $e_\lambda$ for $\lambda'\in P_k$
    \item $h_\lambda$ for $\lambda'\in P_k$
    \item $s_\lambda$ for $\lambda\in P_k$
    \item When $\mathcal{R}$ is a commutative $\mathbb{Q}$-algebra: $p_\lambda$ for $\lambda'\in P_k$
\end{itemize}
Two well-known recursive identities state that for $i>k$, we have
\begin{align}
\label{equNewtonGirardpe}
p_i &= \sum_{j=1}^k (-1)^{j+1}e_j p_{i-j} \\
\label{equNewtonGirardhe}
\text{and } h_i &= \sum_{j=1}^k (-1)^{j+1}e_j h_{i-j}.
\end{align}
These follow from \cite[(2.11')]{macdonald} and \cite[(7.13)]{enum2} respectively upon setting $x_{k+1},x_{k+2},\dots$ to $0$, noting that this sets $e_{k+1},e_{k+2},\dots$ to $0$, recalling that $e_0=1$, and rearranging.  The first identity (\ref{equNewtonGirardpe}) is one of the Newton-Girard identities.

When $\ell(\lambda)\leq k$, the Jacobi-Trudi identity states that
\begin{align}
\label{equJacobiTrudi}
s_\lambda = \det(h_{\lambda_i+j-i})_{i,j=1}^{k}
\end{align}
where $h_i=0$ for $i<0$ (see \cite[(3.4)]{macdonald}).

We set $\mathcal{S}_i=\textnormal{span}_\mathcal{R}(m_\lambda\,|\,\lambda\in P_k(i))$ to be the set of all homogeneous polynomials of degree $i$ in $\mathcal{S}$.  Then $\mathcal{S}$ is a graded ring
$$
\mathcal{S} = \mathcal{S}_0\oplus \mathcal{S}_1\oplus\cdots
$$
and each of the bases for $\mathcal{S}$ becomes a basis for $\mathcal{S}_i$ upon restricting to those $\lambda$ that satisfy $|\lambda|=i$.  In particular, $\mathcal{S}_i$ is a free and finite $\mathcal{R}$-module of rank $\#P_k(i)$.

We will use the terminology of \cite[Remark 11.1.17]{expansions} and \cite[Corollary 11.1.19]{expansions} to discuss the transition matrices between the bases of $\mathcal{S}_i$. In particular, we will need the following:
\begin{itemize}
    \item The family $\{s_\nu\}_{\nu\in P_k(i)}$ expands unitriangularly in $\{m_\nu\}_{\nu\in P_k(i)}$ under $\unlhd$, that is,
    \begin{align}
        \label{equExpandSinM}
        s_\lambda = \sum_{\substack{\mu\in P_k(i)\\\mu \unlhd \lambda}} M(s,m)_{\lambda,\mu} m_{\mu}
    \end{align}
    where $M(s,m)_{\lambda,\mu}\in\mathcal{R}$ and $M(s,m)_{\lambda,\lambda}=1$.
    \item The family $\{e_{\nu'}\}_{\nu\in P_k(i)}$ expands unitriangularly in $\{s_\nu\}_{\nu\in P_k(i)}$ under $\unlhd$, that is,
    \begin{align}
        \label{equExpandEinS}
        e_{\lambda'} = \sum_{\substack{\mu\in P_k(i)\\\mu \unlhd \lambda}} M(e,s)_{\lambda,\mu} s_{\mu}
    \end{align}
    where $M(e,s)_{\lambda,\mu}\in\mathcal{R}$ and $M(e,s)_{\lambda,\lambda}=1$.
\end{itemize}
The expansions (\ref{equExpandSinM}) and (\ref{equExpandEinS}) follow from \cite[Table 1]{macdonald} upon setting $x_{k+1},x_{k+2},\dots$ to $0$.

We define $\Lambda=\mathcal{R}[x_1,x_2,\dots]^{\mathfrak{S}_\infty}$, the ring of symmetric functions, to be the ring of power series over $\mathcal{R}$ of bounded degree in countably many variables which are invariant under any permutation of $x_1,x_2,\dots$.  We can define $m_\lambda$ and $p_\lambda$ in $\Lambda$ identically as in $\mathcal{S}$ by setting $k=\infty$ (we will assume that we are working in $\mathcal{S}$ unless otherwise specified).  We set $\Lambda_i$ to be the set of all homogeneous power series of degree $i$ in $\Lambda$ so that $\Lambda$ is a graded ring
$$
\Lambda = \Lambda_0\oplus \Lambda_1\oplus\cdots.
$$
Setting $x_{k+1},x_{k+2},\dots$ to $0$ in a symmetric function gives a symmetric polynomial in $x_1,\dots,x_k$, and setting $x_{k+1},x_{k+2},\dots$ to $0$ is an algebra homomorphism from $\Lambda$ onto $\mathcal{S}$ that sends $m_\lambda$ to $m_\lambda$ and $p_\lambda$ to $p_\lambda$.

\section{Bases with Power Sum Symmetric Polynomials}

For this section let $\mathcal{R}$ be a commutative $\mathbb{Q}$-algebra.  Fix some positive integer $k$.  Set $\mathcal{S}=\mathcal{R}[x_1,\dots,x_k]^{\mathfrak{S}_k}$ to be the ring of symmetric polynomials in $\mathcal{R}[x_1,\dots,x_k]$.

Fix some integer $n\geq k$.

\begin{proposition}
\label{propMixedBasisMlambdaPmu}
The set
$$
\{m_\lambda p_\mu \,|\, \lambda\in P_{k,n-k}\text{, } \mu\in Q_{n-k+1,n}\}
$$
is an $\mathcal{R}$-basis for $\mathcal{S}$.
\end{proposition}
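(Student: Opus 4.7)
My plan is to argue degree by degree. Since $\mathcal{S}=\bigoplus_{i\geq 0}\mathcal{S}_i$ is graded with each $\mathcal{S}_i$ a free $\mathcal{R}$-module of rank $\#P_k(i)$, and each product $m_\lambda p_\mu$ is homogeneous of degree $|\lambda|+|\mu|$, it suffices to show at each $i$ that $\{m_\lambda p_\mu:|\lambda|+|\mu|=i\}$ is an $\mathcal{R}$-basis for $\mathcal{S}_i$. Since a spanning set of size equal to the rank of a finitely generated free module over a commutative ring is automatically a basis, I only need the cardinality match and spanning.

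The cardinality match
$$
\sum_{j+l=i}\#P_{k,n-k}(j)\cdot\#Q_{n-k+1,n}(l)=\#P_k(i)
$$
is the $t^i$-coefficient of the Gaussian binomial identity
$$
\binom{n}{k}_t\cdot\prod_{j=n-k+1}^n(1-t^j)^{-1}=\prod_{j=1}^k(1-t^j)^{-1}.
$$

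For spanning I would proceed by a double induction: outer on $|\nu|$, inner on the dominance order $\unlhd$ run from the top (so the one-row partition $(|\nu|)$ is handled first within each degree). The central local identity in the monomial basis is
$$
p_r\cdot m_{\bar\nu}=c_0\,m_{(r,\bar\nu)}+\sum_{j\geq 2}c_j\,m_{\nu^{(j)}},
$$
valid when $r\geq\bar\nu_1$, where the leading coefficient $c_0$ equals the multiplicity of $r$ in $(r,\bar\nu)$ (a positive integer, hence invertible in the $\mathbb{Q}$-algebra $\mathcal{R}$) and each $\nu^{(j)}$, obtained by adding $r$ to a positive part of $\bar\nu$, strictly dominates $(r,\bar\nu)$. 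If $\nu_1\leq n-k$, then $m_\nu$ is already in the family. If $n-k+1\leq\nu_1\leq n$, apply the identity with $r=\nu_1$: the outer induction expresses $m_{\bar\nu}=\sum m_{\lambda'}p_{\mu'}$, so $p_{\nu_1}m_{\bar\nu}=\sum m_{\lambda'}p_{\mu'\cup(\nu_1)}$ lies in the span; the inner induction places each $m_{\nu^{(j)}}$; and we solve for $m_\nu$ using $c_0^{-1}$.

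The main obstacle is $\nu_1>n$, where $p_{\nu_1}$ is not itself in the family. My plan is to replace $p_{\nu_1}$ by iterated use of Newton-Girard (\ref{equNewtonGirardpe}) until all occurring $p$-indices fall in $\{n-k+1,\ldots,n\}$: this exhibits $p_{\nu_1}$ as a $\mathbb{Z}$-combination of products $e_{j_1}\cdots e_{j_s}\,p_r$ with $r\leq n$. Each inner symmetric polynomial $e_{j_1}\cdots e_{j_s}\cdot m_{\bar\nu}$ has strictly smaller degree $|\nu|-r<|\nu|$, so the outer induction writes it as $\sum m_{\lambda''}p_{\mu''}$; multiplying by $p_r$ and noting $\mu''\cup(r)\in Q_{n-k+1,n}$, each term $e_{j_1}\cdots e_{j_s}\,p_r\,m_{\bar\nu}$ lies in the span, and therefore so does $p_{\nu_1}m_{\bar\nu}$. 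The inner dominance induction places the $m_{\nu^{(j)}}$ as before, and solving gives $m_\nu$ in the span. The same Newton-Girard substitution closes the inner base case $\nu=(i)$ with $i>n$. The technical heart of the proof is checking that these interleaved inductions on degree and dominance always terminate at previously established cases.
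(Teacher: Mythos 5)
Your proof is correct but takes a genuinely different route to spanning. The paper's Lemma~\ref{lemMixedBasisMPContainsMlambda} expands $m_\lambda$ all at once in the power-sum basis of $\Lambda$ (using the invertible triangularity of the $m$-to-$p$ transition over a $\mathbb{Q}$-algebra), projects to $\mathcal{S}$, factors out the leading $p_{\mu_1}$ from each term, reduces those indices into $\{n-k+1,\dots,n\}$ via Lemma~\ref{lemReducingPViaNewtonGirard}, and finishes with a single induction on degree. You instead stay entirely inside $\mathcal{S}$, replacing the global transition-matrix fact with a local Pieri-type identity
\[
p_r\,m_{\bar\nu} = c_0\,m_{(r,\bar\nu)} + \sum_{j} c_j\,m_{\nu^{(j)}},\qquad r\geq\bar\nu_1,
\]
and a double induction on degree and (downward) dominance, with $c_0^{-1}$ playing the role that the $\mathbb{Q}$-coefficients $b_{\lambda,\mu}$ of the $m$-to-$p$ transition play in the paper's Claim. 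Both routes need Newton--Girard (\ref{equNewtonGirardpe}) and the $\mathbb{Q}$-algebra hypothesis, and both use the cardinality-plus-spanning criterion the paper cites from \cite[Exercise~2.5.18b]{expansions}. The trade-off: the paper's version is structurally simpler (one induction; the dominance bookkeeping is offloaded to a cited transition-matrix fact and never resurfaces), while yours is more self-contained and elementary -- it avoids $\Lambda$, the $m$-to-$p$ transition matrix, and that external reference -- at the cost of a more delicate interleaved induction. If you write this up, you should verify explicitly that each $\nu^{(j)}$ obtained by adding $r\geq\bar\nu_1$ to a positive part of $\bar\nu$ strictly dominates $(r,\bar\nu)$ (true: padding-aware partial sums of $\nu^{(j)}$ exceed those of $(r,\bar\nu)$ weakly everywhere and strictly in position $1$), and that $c_0$, the multiplicity of $r$ in $(r,\bar\nu)$, is a positive integer, hence invertible.
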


For this section, set
$$
V_i = \{m_\lambda p_\mu \,|\, \lambda\in P_{k,n-k}\text{, } \mu\in Q_{n-k+1,n} \text{, } |\lambda|+|\mu| = i\}
$$
This is the restriction of the basis from Proposition~\ref{propMixedBasisMlambdaPmu} to $\mathcal{S}_i$.

\begin{remark}
\label{rmkStrategyForMixedBasisMlambdaPmu}
Since $m_\lambda p_\mu$ is homogeneous of degree $|\lambda|+|\mu|$, Proposition~\ref{propMixedBasisMlambdaPmu} is equivalent to showing that for $i\geq0$, $V_i$ is an $\mathcal{R}$-basis for $\mathcal{S}_i$.  We will do so by showing that $\#V_i=\textnormal{rank}_\mathcal{R}(\mathcal{S}_i)$ and $\textnormal{span}_\mathcal{R}(V_i)=\mathcal{S}_i$ (that this will imply Proposition~\ref{propMixedBasisMlambdaPmu} is well-known, see \cite[Exercise 2.5.18b]{expansions}\footnote{
Exercise solution: Consider the $\textbf{k}$-linear map $f:A\to A$ given by $f(\gamma_i)=\beta_i$. Then $f$ is a surjective endomorphism, so by \cite[Proposition 1.2]{rankandspanimplybasis}, $f$ is an isomorphism.  Therefore, $(f(\gamma_i))_{i\in I}=(\beta_i)_{i\in I}$ is a basis of $A$.
}).
\end{remark}

\begin{lemma}
\label{lemDimMixedBasisMlambdaPmuWorks}
For $i\geq0$, we have $\#V_i=\textnormal{rank}_\mathcal{R}(\mathcal{S}_i)$.
\end{lemma}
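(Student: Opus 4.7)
The plan is to verify the cardinality claim by a short generating function identity, using from Section~2 that $\textnormal{rank}_\mathcal{R}(\mathcal{S}_i)=\#P_k(i)$. Since each product $m_\lambda p_\mu$ is homogeneous of degree $|\lambda|+|\mu|$ and the pair $(\lambda,\mu)$ ranges independently over $P_{k,n-k}\times Q_{n-k+1,n}$, the elements of $V_i$ are indexed by the pairs with $|\lambda|+|\mu|=i$, so
\[
\#V_i \;=\; \sum_{j=0}^{i}\#P_{k,n-k}(j)\cdot\#Q_{n-k+1,n}(i-j).
\]
The task therefore reduces to proving the convolution identity $\sum_{j=0}^{i}\#P_{k,n-k}(j)\#Q_{n-k+1,n}(i-j)=\#P_k(i)$.

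I would then record three standard generating functions in a variable $q$. First, $\sum_{i\geq 0}\#P_{k,n-k}(i)\,q^i = \binom{n}{k}_q = \prod_{j=1}^{k}\frac{1-q^{n-k+j}}{1-q^j}$, since the Gaussian binomial coefficient enumerates partitions fitting in a $k\times(n-k)$ box. Second, $\sum_{i\geq 0}\#Q_{n-k+1,n}(i)\,q^i = \prod_{j=n-k+1}^{n}\frac{1}{1-q^j}$, since an element of $Q_{n-k+1,n}$ is determined by the multiplicity of each allowed part size $n-k+1,\ldots,n$. Third, $\sum_{i\geq 0}\#P_k(i)\,q^i = \prod_{j=1}^{k}\frac{1}{1-q^j}$, obtained by conjugating partitions with at most $k$ parts into partitions with all parts at most $k$.

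Multiplying the first two generating functions gives
\[
\binom{n}{k}_q\cdot\prod_{j=n-k+1}^{n}\frac{1}{1-q^j} \;=\; \prod_{j=1}^{k}\frac{1}{1-q^j},
\]
since the numerator factors $1-q^{n-k+j}$ of $\binom{n}{k}_q$ cancel exactly the denominators of the second product. Extracting the coefficient of $q^i$ on each side then yields $\#V_i = \#P_k(i) = \textnormal{rank}_\mathcal{R}(\mathcal{S}_i)$, as required.

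The main obstacle is essentially nil: the lemma reduces to a one-line formal power series identity, and the only mildly substantive ingredient is recognizing the closed form of the Gaussian binomial coefficient. The real work in establishing Proposition~\ref{propMixedBasisMlambdaPmu} will lie in the companion spanning statement $\textnormal{span}_\mathcal{R}(V_i)=\mathcal{S}_i$, not in this counting lemma.
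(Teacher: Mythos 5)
Your proposal is correct and uses essentially the same argument as the paper: express $\sum_i \#V_i\,q^i$ as the product of the generating functions for $P_{k,n-k}$ (the Gaussian binomial) and for $Q_{n-k+1,n}$, observe that the numerator factors of $\binom{n}{k}_q$ cancel the latter's denominators, and compare with $\sum_i \#P_k(i)\,q^i = \prod_{j=1}^k (1-q^j)^{-1}$.
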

\begin{proof}
Since $P_k(i)$ indexes the $m_\lambda$ basis for $\mathcal{S}_i$, by \cite[(1.76)]{enum1} we have
\begin{align*}
\sum_{i\geq 0} q^i \textnormal{rank}_\mathcal{R}(\mathcal{S}_i)
= \sum_{i\geq 0} q^i \#P_k(i)
= \prod_{j=1}^k \frac{1}{1-q^j}.
\end{align*}
Clearly
\begin{align*}
\sum_{i\geq 0} q^i \#V_i
&= \left(\sum_{i\geq 0} q^i \#P_{k,n-k}(i) \right)
\left(\sum_{i\geq 0} q^i \#Q_{n-k+1,n}(i) \right).
\end{align*}
From \cite[Section 1.6, (6)]{aigner} and \cite[Section 1.6, (4)]{aigner}, we have that
\begin{align*}
\sum_{i\geq 0} q^i \#P_{k,n-k}(i) = \begin{bmatrix} n\\k \end{bmatrix}_q =\prod_{j=1}^k \frac{1-q^{n-k+j}}{1-q^j}.
\end{align*}
From \cite[Corollary 1.8.2]{enum1}, we have that
\begin{align*}
\sum_{i\geq 0} q^i \#Q_{n-k+1,n}(i) = \prod_{j=1}^k \frac{1}{1-q^{n-k+j}}.
\end{align*}
Therefore,
\begin{align}
\label{equDimCounting}
\sum_{i\geq 0} q^i \#V_i
= \left(\prod_{j=1}^k \frac{1-q^{n-k+j}}{1-q^j}\right) \left(\prod_{j=1}^k \frac{1}{1-q^{n-k+j}}\right)
= \prod_{j=1}^k \frac{1}{1-q^j}
= \sum_{i\geq 0} q^i \textnormal{rank}_\mathcal{R}(\mathcal{S}_i).
\end{align}
The result follows from equating coefficients of $q^i$ on both sides of (\ref{equDimCounting}).
\end{proof}

\begin{remark}
One interesting question raised by Lemma~\ref{lemDimMixedBasisMlambdaPmuWorks} is whether a bijective proof that
$$
\#P_k(i)=\#V_i
$$
could be found.  No such proof is known to the author.
\end{remark}

In order to show that $V_i$ spans $\mathcal{S}_i$, we will show that $\textnormal{span}_\mathcal{R}(V_i)$ contains every member of the monomial basis of $\mathcal{S}_i$.  We will now provide an algorithm to convert any $m_\lambda$ (with $\lambda\in P_k(i)$) into an $\mathcal{R}$-linear combination over $V_i$ by induction on $i$.  The steps of the algorithm are:
\begin{enumerate}[label=(\arabic*)]
    \item If $\lambda_1\leq n-k$ then $m_\lambda=m_\lambda p_\emptyset \in V_i$ and we are done. If not, we have $\lambda_1\geq n-k+1$.
    \item Expand $m_\lambda$ in the power sum basis in $\Lambda_i$.  Note that each $p_\mu$ with a nonzero coefficient satisfies $\mu_1\geq\lambda_1\geq n-k+1$.  Set $x_{k+1},x_{k+2},\dots$ to $0$ to project this expansion into $\mathcal{S}_i$.
    \item Repeatedly apply the Newton-Girard identity (\ref{equNewtonGirardpe})
    $$
    p_j = \sum_{t=1}^k (-1)^{t+1}e_t p_{j-t}
    $$
    to convert each $p_{\mu_1}$ into an $\mathcal{S}$-linear combination of $p_{n-k+1},\dots,p_n$.  Note that since all of these computations can be seen as taking place in $\mathcal{S}_{\mu_1}$, we have that the degree of the coefficient of $p_{n-k+t}$ is $\mu_1-(n-k+t)$.
    \item Factor out $p_{n-k+1},\dots,p_n$ from each term and recollect to write $m_\lambda$ as an $\mathcal{S}$-linear combination of $p_{n-k+1},\dots,p_n$ in which the coefficient of $p_m$ has degree $i-m$ for each $m\in \{n-k+1,\dots,n\}$.
    \item By induction, the coefficient of each of $p_{n-k+1},\dots,p_n$ can be written in the basis of Proposition~\ref{propMixedBasisMlambdaPmu}.  Specifically, we expand each coefficient in the $m_\lambda$ basis of $\mathcal{S}$ and repeat the algorithm.
    \item Since multiplying an element of $V_{i-m}$ by $p_m$ with $n-k+1\leq m \leq n$ will result in a member of $V_i$, we see that we have written $m_\lambda$ as an $\mathcal{R}$-linear combination of elements of our basis.
\end{enumerate}

\begin{example}
Let $k=3, n=4$, $\lambda=(2,2,1)$, and consider $m_\lambda=m_{2,2,1}$.
\begin{enumerate}[label=(\arabic*)]
    \item We have $\lambda_1=2\not<1=n-k$, so we proceed to step (2).
    \item In $\Lambda$ we have
    $$
    m_{2,2,1} = \frac{1}{2}p_{2,2,1} - p_{3,2} - \frac{1}{2}p_{4,1} + p_{5}
    $$
    and setting $x_4,x_5,\dots$ to $0$ we see that this equality holds in $\mathcal{S}$ as well.
    \item The only $p_\mu$ with $\mu_1>n$ is $p_{5}$, so we use a Newton-Girard identity to rewrite
    $$
    m_{2,2,1} = \frac{1}{2}p_{2,2,1} - p_{3,2} - \frac{1}{2}p_{4,1} + (e_1 p_4 - e_2 p_3 + e_3 p_2)
    $$
    \item Now we factor out $p_{n-k+1},\dots,p_n=p_2,p_3,p_4$:
    \begin{align}
    \label{equExExp}
    m_{2,2,1} = \left(\frac{1}{2}p_{2,1}+e_3\right)p_2 + (-p_2-e_2)p_3 + \left(- \frac{1}{2}p_1 + e_1\right)p_4
    \end{align}
    \item To convert the coefficients of $p_2$, $p_3$, and $p_4$ into our basis, we convert them into the monomial basis and apply the algorithm, giving:
    \begin{equation}
    \begin{aligned}
    \label{equExIter}
        \frac{1}{2}p_{2,1}+e_3 &= \frac{1}{2}m_1p_2 + m_{1,1,1}
        \\
        -p_2-e_2 &=  -p_2 - m_{1,1}
        \\
        - \frac{1}{2}p_1 + e_1 &=  \frac{1}{2}m_1
    \end{aligned}
    \end{equation}
    \item Substituting (\ref{equExIter}) into (\ref{equExExp}), we see that
    \begin{align*}
        m_{2,2,1} &= \left(\frac{1}{2}m_1p_2 + m_{1,1,1}\right)p_2 + (-p_2 - m_{1,1})p_3 + \left(\frac{1}{2}m_1\right)p_4
        \\
        &= \frac{1}{2}m_1p_{2,2} + m_{1,1,1}p_2 -p_{3,2} - m_{1,1}p_3 + \frac{1}{2}m_1p_4.
    \end{align*}
\end{enumerate}
\end{example}

Here is a formal proof that the algorithm works and terminates.  This also proves Proposition~\ref{propMixedBasisMlambdaPmu}. First, we will show that step (3) works, then we will show that the entire algorithm is valid.

\begin{lemma}
\label{lemReducingPViaNewtonGirard}
Let $t\geq n-k+1$.  Then for some $d_{t,m}\in\mathcal{S}_{t-m}$ (where $n-k+1\leq m\leq n$) we have
$$
p_t = \sum_{m=n-k+1}^n d_{t,m} p_m,
$$
that is, step \textnormal{(3)} is valid.
\end{lemma}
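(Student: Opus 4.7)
The plan is to prove this by strong induction on $t$, with the cases $n-k+1 \le t \le n$ serving as the base and the cases $t > n$ handled by a single application of the Newton–Girard identity (\ref{equNewtonGirardpe}).

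For the base case, when $n-k+1 \le t \le n$ we simply take $d_{t,t} = 1 \in \mathcal{R} = \mathcal{S}_0$ and $d_{t,m} = 0$ for $m \ne t$. Then trivially $p_t = \sum_{m=n-k+1}^{n} d_{t,m} p_m$, and the degree requirement $d_{t,m} \in \mathcal{S}_{t-m}$ is satisfied.

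For the inductive step, assume $t > n$ and that the lemma has been established for all $t'$ with $n-k+1 \le t' < t$. Since $n \ge k$, we have $t > n \ge k$, so Newton–Girard (\ref{equNewtonGirardpe}) applies:
$$
p_t = \sum_{j=1}^{k} (-1)^{j+1} e_j p_{t-j}.
$$
For each $j \in \{1,\ldots,k\}$ the index satisfies $t-j \ge t-k \ge n+1-k = n-k+1$ and $t-j \le t-1 < t$, so the inductive hypothesis gives
$$
p_{t-j} = \sum_{m=n-k+1}^{n} d_{t-j,m}\, p_m
$$
with $d_{t-j,m} \in \mathcal{S}_{t-j-m}$. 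Substituting and swapping the order of summation yields
$$
p_t = \sum_{m=n-k+1}^{n} \Bigl( \sum_{j=1}^{k} (-1)^{j+1} e_j\, d_{t-j,m} \Bigr) p_m,
$$
so we set $d_{t,m} = \sum_{j=1}^{k} (-1)^{j+1} e_j\, d_{t-j,m}$. Since $e_j \in \mathcal{S}_j$ and $d_{t-j,m} \in \mathcal{S}_{t-j-m}$, each product $e_j d_{t-j,m}$ lies in $\mathcal{S}_{t-m}$, hence so does $d_{t,m}$. This completes the induction.

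The argument is essentially mechanical once one observes that the ``window'' $[n-k+1, n]$ has width exactly $k$, which is precisely what is needed so that Newton–Girard brings indices $t-1, t-2, \ldots, t-k$ back into the window when $t$ equals its lower edge. There is no genuine obstacle; the only thing to verify carefully is that the inductive calls stay inside $[n-k+1, t-1]$ and that the degree bookkeeping works, both of which follow from the hypothesis $n \ge k$.
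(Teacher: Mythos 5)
Your proof is correct and follows essentially the same approach as the paper: strong induction on $t$, Kronecker delta in the base case, and a single application of Newton--Girard (\ref{equNewtonGirardpe}) followed by swapping the order of summation in the inductive step, with the same definition of $d_{t,m}$ and the same degree bookkeeping. The only difference is that you make explicit two checks the paper leaves implicit (that $t>n\geq k$ so the identity applies, and that $t-j\geq n-k+1$ so the inductive hypothesis applies), which is a harmless strengthening.
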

\begin{proof}
We will induct on $t$.

Base case $n-k+1\leq t\leq n$: We can just take $d_{t,m}=\delta_{t,m}$ where $\delta$ is the Kronecker delta.

Induction step $t>n$: Suppose that Lemma~\ref{lemReducingPViaNewtonGirard} holds for all smaller $t\geq n-k+1$. From the Newton-Girard identity (\ref{equNewtonGirardpe}) we have
\begin{align*}
p_t &= \sum_{j=1}^k (-1)^{j+1}e_j p_{t-j}
\\
&= \sum_{j=1}^k (-1)^{j+1}e_j \sum_{m=n-k+1}^n d_{t-j,m} p_m
\\
&= \sum_{m=n-k+1}^n \left( \sum_{j=1}^k (-1)^{j+1}e_j d_{t-j,m} \right) p_m
\end{align*}
so we can take
$$
d_{t,m} = \sum_{j=1}^k (-1)^{j+1}e_j d_{t-j,m}
$$
which is in $\mathcal{S}_{j+t-j-m}=\mathcal{S}_{t-m}$ as desired.
\end{proof}

\begin{lemma}
\label{lemMixedBasisMPContainsMlambda}
Let $i\geq 0$ and $\lambda\in P_k(i)$.  Then $m_\lambda\in\textnormal{span}_\mathcal{R} V_i$. In particular, the algorithm works.
\end{lemma}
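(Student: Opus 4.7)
The plan is strong induction on $i$, formalizing the algorithm described above. The base case $i=0$ is immediate since $m_\emptyset = m_\emptyset p_\emptyset \in V_0$. For the inductive step, fix $\lambda\in P_k(i)$ and assume the claim for all smaller degrees. If $\lambda_1\leq n-k$, then $\lambda\in P_{k,n-k}$ and $m_\lambda = m_\lambda p_\emptyset$ already lies in $V_i$, so the only nontrivial case is $\lambda_1\geq n-k+1$.

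In that case, I first expand $m_\lambda$ in the power-sum basis of $\Lambda_i$ (available since $\mathcal{R}$ is a $\mathbb{Q}$-algebra) and project to $\mathcal{S}_i$ by setting $x_{k+1},x_{k+2},\dots$ to $0$. The classical triangularity of this transition gives $m_\lambda = \sum_{\mu\unrhd\lambda} c_\mu p_\mu$ with $c_\mu\in\mathbb{Q}$, so every $\mu$ appearing satisfies $\mu_1\geq \lambda_1\geq n-k+1$. This puts us in position to apply Lemma~\ref{lemReducingPViaNewtonGirard} to each $p_{\mu_1}$; after substituting and regrouping by the distinguished factor $p_m$, I obtain an expression
\[ m_\lambda = \sum_{m=n-k+1}^{n} f_m\,p_m,\qquad f_m\in\mathcal{S}_{i-m}. \]
Since $m\geq n-k+1\geq 1$ we have $i-m<i$, so the induction hypothesis gives $f_m\in\textnormal{span}_\mathcal{R} V_{i-m}$. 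Writing $f_m = \sum_j r_j\, m_{\lambda_j} p_{\mu_j}$ with $\lambda_j\in P_{k,n-k}$ and $\mu_j\in Q_{n-k+1,n}$, and multiplying through by $p_m$, gives terms $m_{\lambda_j}\,p_\nu$ where $\nu$ is $\mu_j$ with the part $m$ inserted. Since $n-k+1\leq m\leq n$ and $\mu_j\in Q_{n-k+1,n}$, we have $\nu\in Q_{n-k+1,n}$ and hence $m_{\lambda_j} p_\nu \in V_i$, completing the induction.

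The main obstacle is the triangularity claim $m_\lambda = \sum_{\mu\unrhd\lambda} c_\mu p_\mu$, which is what guarantees $\mu_1\geq n-k+1$ and thereby makes Lemma~\ref{lemReducingPViaNewtonGirard} applicable. This is classical: expanding $p_\mu = \prod_j p_{\mu_j}$ and collecting repeated indices yields $p_\mu = \sum_{\lambda\unrhd\mu} R_{\lambda,\mu} m_\lambda$, with the diagonal entry $R_{\mu,\mu}$ a nonzero positive integer, so the $p\to m$ transition matrix is upper triangular with invertible diagonal over $\mathbb{Q}$, and its inverse has the required triangularity. Everything else in the proof is routine degree bookkeeping, and the induction terminates because each factor $p_m$ removes at least $n-k+1\geq 1$ from the degree of the coefficient to be expanded.
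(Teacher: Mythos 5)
Your proof is correct and follows essentially the same route as the paper: expand $m_\lambda$ in the power-sum basis of $\Lambda_i$ using the triangularity of that transition under (reverse) majorization to guarantee $\mu_1\geq\lambda_1\geq n-k+1$, project to $\mathcal{S}_i$, peel off one distinguished factor $p_m$ with $n-k+1\leq m\leq n$ via Lemma~\ref{lemReducingPViaNewtonGirard}, and finish by strong induction on degree. The only cosmetic difference is your base case ($i=0$ alone, rather than $i\leq n-k$), which is fine since your inductive step already handles all $\lambda$ with $\lambda_1\leq n-k$ without invoking the hypothesis.
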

\begin{proof}
We will induct on $i$.

Base case $i\leq n-k$: We have $\lambda_1\leq i\leq n-k$ so $m_\lambda=m_\lambda p_\emptyset \in V_i$.

Induction step $i>n-k$:  Suppose that Lemma~\ref{lemMixedBasisMPContainsMlambda} holds for all smaller $i$.  Let $\lambda\in P_k(i)$.  If $\lambda_1 \leq n-k$ then $m_\lambda=m_\lambda p_\emptyset \in V_i$ as desired, so let $\lambda_1\geq n-k+1$.

\begin{claim*}
In $\Lambda$, for some $c_t\in \Lambda_{i-t}$, we have
$$
m_\lambda = \sum_{t=\lambda_1}^i c_t p_t.
$$
\end{claim*}
\begin{proof}[Proof of the claim]
The proof of \cite[Proposition~2.2.10]{expansions} shows that in $\Lambda$, the family $\{p_\nu\}_{\nu\in P(i)}$ expands invertibly triangularly in the family $\{m_\nu\}_{\nu\in P(i)}$ under the reverse of the majorization ordering.  Then by \cite[Corollary~11.1.19~(a)]{expansions} we have that the family $\{m_\nu\}_{\nu\in P(i)}$ expands invertibly triangularly in the family $\{p_\nu\}_{\nu\in P(i)}$ also under the reverse of the majorization ordering.  Specifically, we have for some $b_{\lambda,\mu}\in\mathcal{R}$ that\footnote{
A combinatorial interpretation for the $b_{\lambda,\mu}$'s is given in \cite[(11)]{egecioglu}, namely,
$$b_{\lambda,\mu} = (-1)^{\ell(\lambda)-\ell(\mu)}\frac{w(B_{\lambda,\mu})}{z_\mu}$$
where $z_\mu,w(B_{\lambda,\mu})\in\mathbb{N}$ and $w(B_{\lambda,\mu})$ is a weighted sum that runs across all ways to insert bricks of sizes $\lambda$ into the Young diagram of $\mu$.
}
$$
m_\lambda = \sum_{\substack{\mu\in P(i)\\\mu\unrhd\lambda}}
b_{\lambda,\mu} p_\mu.
$$
For each $\mu$ in the sum, we have $\mu \unrhd \lambda \implies \mu_1 \geq \lambda_1$, and also $\mu_1\leq |\mu|=i$, so it follows that
\begin{align*}
m_\lambda &= \sum_{\mu_1 = \lambda_1}^i \sum_{\substack{\mu\in P(i)\\\mu\unrhd\lambda}} b_{\lambda,\mu} p_{\mu_1}p_{\mu_2}\cdots p_{\mu_{\ell(\mu)}}
\\
&= \sum_{\mu_1 = \lambda_1}^i \left(\sum_{\substack{\mu\in P(i)\\\mu\unrhd\lambda}} b_{\lambda,\mu} p_{\mu_2}\cdots p_{\mu_{\ell(\mu)}}\right) p_{\mu_1}.
\end{align*}
Therefore, we can take
$$
c_t = \sum_{\substack{\mu\in P(i)\\\mu\unrhd\lambda\\\mu_1=t}} b_{\lambda,\mu} p_{\mu_2}\cdots p_{\mu_{\ell(\mu)}}
$$
which is in $\Lambda_{\mu_2+\cdots+\mu_{\ell(\mu)}} = \Lambda_{i-t}$.
\end{proof}

Now, setting $x_{k+1},x_{k+2},\dots$ to $0$ we can write in $\mathcal{S}_i$ that
$$
m_\lambda = \sum_{t=\lambda_1}^i c_t p_t
$$
where $c_t\in \mathcal{S}_{i-t}$.  Applying Lemma~\ref{lemReducingPViaNewtonGirard} to each $t$ in the sum (since $t\geq\lambda_1\geq n-k+1$) gives
\begin{align*}
m_\lambda &= \sum_{t=\lambda_1}^i c_t p_t
\\
&= \sum_{t=\lambda_1}^i c_t \sum_{m=n-k+1}^n d_{t,m} p_m
\\
&= \sum_{m=n-k+1}^n \left( \sum_{t=\lambda_1}^i c_t d_{t,m} \right) p_m
\end{align*}
where $d_{t,m}\in\mathcal{S}_{t-m}$.  Note that $c_t d_{t,m}\in\mathcal{S}_{i-t+t-m}=\mathcal{S}_{i-m}$.  Since $i-m<i$, by our induction hypothesis, we can expand $c_t d_{t,m}$ as an $\mathcal{R}$-linear combination of $V_{i-m}$.  Then since multiplication by $p_m$, $n-k+1\leq m\leq n$, converts a member of $V_{i-m}$ into a member of $V_i$, we see that we have expanded $m_\lambda$ as an $\mathcal{R}$-linear combination of elements of $V_i$.  This concludes the induction step.
\end{proof}

\begin{proof}[Proof of Proposition~\ref{propMixedBasisMlambdaPmu}]
Recall Remark~\ref{rmkStrategyForMixedBasisMlambdaPmu}.  We established that $\#V_i=\textnormal{rank}_\mathcal{R}(\mathcal{S}_i)$ in Lemma~\ref{lemDimMixedBasisMlambdaPmuWorks}.  Then, in Lemma~\ref{lemMixedBasisMPContainsMlambda} we showed that if $\lambda\in P_k(i)$ then $m_\lambda\in \textnormal{span}_\mathcal{R}(V_i)$.  It follows that $\textnormal{span}_\mathcal{R}(V_i)=\mathcal{S}_i$, so $V_i$ is an $\mathcal{R}$-basis for $\mathcal{S}_i$ and the result follows.
\end{proof}

We can now switch the $m_\lambda$'s for $s_\lambda$'s to obtain the basis (\ref{equSPBasis}) from Theorem~\ref{thmMain}.

\begin{lemma}
\label{lemSinMTriangularityPkn-k}
The family $\{s_\nu\}_{\nu\in P_{k,n-k}}$ expands invertibly triangularly in $\{m_\nu\}_{\nu\in P_{k,n-k}}$ under the majorization ordering $\unlhd$.  This lemma still holds when $\mathcal{R}$ is any commutative ring\footnote{We will need this case in Section 5.}.
\end{lemma}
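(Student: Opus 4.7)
The plan is to deduce Lemma~\ref{lemSinMTriangularityPkn-k} directly from the already-stated unitriangular expansion (\ref{equExpandSinM}) of $\{s_\nu\}_{\nu\in P_k(i)}$ in $\{m_\nu\}_{\nu\in P_k(i)}$. Since (\ref{equExpandSinM}) was asserted for $\mathcal{R}$ any commutative ring (it comes from \cite[Table~1]{macdonald} after setting variables to $0$), the extension to arbitrary $\mathcal{R}$ will be automatic, so there is really only one thing to verify: that the expansion of $s_\lambda$ for $\lambda\in P_{k,n-k}$ stays within the sub-family indexed by $P_{k,n-k}$.

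First, I would fix $i\geq 0$ and restrict attention to $P_{k,n-k}(i)$; since both $\{s_\nu\}_{\nu\in P_{k,n-k}}$ and $\{m_\nu\}_{\nu\in P_{k,n-k}}$ are graded, it suffices to prove the unitriangularity degree by degree. Let $\lambda\in P_{k,n-k}(i)$. By (\ref{equExpandSinM}),
$$
s_\lambda = \sum_{\substack{\mu\in P_k(i)\\ \mu\unlhd\lambda}} M(s,m)_{\lambda,\mu}\, m_\mu,
\qquad M(s,m)_{\lambda,\lambda}=1.
$$
The key observation is that $\mu\unlhd\lambda$ forces $\mu_1\leq\lambda_1$ (this is the $j=1$ instance of the majorization definition). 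Since $\lambda_1\leq n-k$, we get $\mu_1\leq n-k$, and combined with $\mu\in P_k(i)$ this yields $\mu\in P_{k,n-k}(i)$. Hence every index that actually appears on the right-hand side lies in $P_{k,n-k}(i)$.

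Therefore the transition matrix $\bigl(M(s,m)_{\lambda,\mu}\bigr)_{\lambda,\mu\in P_{k,n-k}(i)}$ is exactly the restriction of the full $P_k(i)$-indexed matrix to the subset $P_{k,n-k}(i)$, and the restriction is still unitriangular under $\unlhd$ with ones on the diagonal, hence invertible over $\mathcal{R}$. Taking the direct sum over all $i\geq 0$ gives the lemma over $P_{k,n-k}$.

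I do not expect any real obstacle here; the main (trivial) point is just to observe that majorization is monotone in the first part, which keeps the expansion inside the rectangle $P_{k,n-k}$. The claim about arbitrary commutative $\mathcal{R}$ requires no extra argument because (\ref{equExpandSinM}) itself was stated in that generality.
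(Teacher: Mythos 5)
Your proof is correct and is essentially the paper's argument. The only cosmetic difference is that the paper phrases the same observation by introducing the rectangular partition $w=(n-k,\dots,n-k)$ ($k$ copies) and noting that, within $P_k$, the set $P_{k,n-k}$ is exactly the down-set $\{\lambda:\lambda\unlhd w\}$, whereas you extract the same conclusion directly from the $j=1$ case of the dominance inequality; both arguments hinge on the identical point that $\mu\unlhd\lambda$ and $\lambda_1\le n-k$ force $\mu_1\le n-k$, so the expansion (\ref{equExpandSinM}) never leaves $P_{k,n-k}(i)$.
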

\begin{proof}
Let $w$ be the partition consisting of $k$ copies of $n-k$.  Then $\lambda\in P_{k,n-k}\implies\lambda \unlhd w$, and every partition $\lambda$ with at most $k$ parts satisfies $\lambda \unlhd w\implies\lambda_1 \leq n-k\implies\lambda\in P_{k,n-k}$. Therefore, $\lambda\in P_{k,n-k} \iff \lambda \unlhd w$.

Let $\lambda\in P_{k,n-k}$.  Then for each $\mu$ in the sum (\ref{equExpandSinM}) we have $\mu\unlhd\lambda\unlhd w\implies \mu\unlhd w\implies \mu\in P_{k,n-k}$.  The result follows.
\end{proof}

\begin{corollary}
\label{corMixedBasisSlambdaPmu}
The set
$$
\{s_\lambda p_\mu \,|\, \lambda\in P_{k,n-k}\text{, } \mu\in Q_{n-k+1,n}\}
$$
is an $\mathcal{R}$-basis for $\mathcal{S}$.
\end{corollary}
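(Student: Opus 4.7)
The plan is to deduce Corollary~\ref{corMixedBasisSlambdaPmu} from Proposition~\ref{propMixedBasisMlambdaPmu} by a change of basis that swaps each $m_\lambda$ factor for $s_\lambda$, while leaving the $p_\mu$ factor untouched. The mechanism is already available in Lemma~\ref{lemSinMTriangularityPkn-k}, which gives a unitriangular (hence invertible over $\mathcal{R}$) transition from $\{s_\nu\}_{\nu\in P_{k,n-k}}$ to $\{m_\nu\}_{\nu\in P_{k,n-k}}$ under the majorization ordering.

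Concretely, I would fix a $\mu\in Q_{n-k+1,n}$ and, for each $\lambda\in P_{k,n-k}$, multiply the expansion supplied by Lemma~\ref{lemSinMTriangularityPkn-k} through by $p_\mu$ to obtain
$$
s_\lambda p_\mu = \sum_{\substack{\nu\in P_{k,n-k}\\ \nu\unlhd\lambda}} M(s,m)_{\lambda,\nu}\, m_\nu p_\mu,
$$
with $M(s,m)_{\lambda,\lambda}=1$. Indexing the family $\{s_\lambda p_\mu\}$ by pairs $(\lambda,\mu)\in P_{k,n-k}\times Q_{n-k+1,n}$ and doing the same for $\{m_\lambda p_\mu\}$, the transition matrix is block-diagonal with one block per value of $\mu$, each block unitriangular in $\unlhd$ on $P_{k,n-k}$. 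Consequently the whole transition matrix is invertible over $\mathcal{R}$.

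Since Proposition~\ref{propMixedBasisMlambdaPmu} guarantees that $\{m_\lambda p_\mu\,|\,\lambda\in P_{k,n-k},\,\mu\in Q_{n-k+1,n}\}$ is an $\mathcal{R}$-basis for $\mathcal{S}$, applying this invertible change of basis yields that $\{s_\lambda p_\mu\,|\,\lambda\in P_{k,n-k},\,\mu\in Q_{n-k+1,n}\}$ is also an $\mathcal{R}$-basis for $\mathcal{S}$. There is no real obstacle in this argument; the only subtlety is to notice that Lemma~\ref{lemSinMTriangularityPkn-k} produces a transition confined to $P_{k,n-k}$ (rather than leaking into partitions with $\lambda_1>n-k$), which is precisely what allows the block-diagonal structure above to descend to the truncated index set.
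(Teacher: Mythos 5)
Your proof is correct and takes essentially the same approach as the paper: both rely on Proposition~\ref{propMixedBasisMlambdaPmu} together with the unitriangular transition of Lemma~\ref{lemSinMTriangularityPkn-k}. The paper phrases the change of basis abstractly by viewing $\mathcal{S}$ as a module over $\textnormal{span}_\mathcal{R}(p_\mu \mid \mu\in Q_{n-k+1,n})$ and citing a general result on unitriangular expansions, whereas you make the same block-diagonal structure explicit by hand; the content is identical.
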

\begin{proof}
By Proposition~\ref{propMixedBasisMlambdaPmu} we see that a basis for the $\textnormal{span}_\mathcal{R}(p_{\mu}\,|\,
\mu\in Q_{n-k+1,k})$-module $\mathcal{S}$ is $\{m_\nu\}_{\nu\in P_{k,n-k}}$.  Therefore, by \cite[Corollary~11.1.19~(e)]{expansions} and Lemma~\ref{lemSinMTriangularityPkn-k}, we see that $\{s_\nu\}_{\nu\in P_{k,n-k}}$ is a basis for the $\textnormal{span}_\mathcal{R}(p_{\mu}\,|\,
\mu\in Q_{n-k+1,k})$-module $\mathcal{S}$ and the result follows.
\end{proof}

Now we will show that the $s_\lambda$'s of Corollary \ref{corMixedBasisSlambdaPmu} may be replaced by $e_{\lambda'}$'s.

\begin{lemma}
\label{lemEinSTriangularityPkn-k}
The family $\{e_{\nu'}\}_{\nu\in P_{k,n-k}}$ expands invertibly triangularly in $\{s_\nu\}_{\nu\in P_{k,n-k}}$ under the majorization ordering $\unlhd$.  This lemma still holds when $\mathcal{R}$ is any commutative ring\footnote{We will need this case in Section 5.}.
\end{lemma}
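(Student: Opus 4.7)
The plan is to essentially replay the argument of Lemma~\ref{lemSinMTriangularityPkn-k}, since the key mechanism (intersecting a downward-closed-in-$\unlhd$ triangular expansion with $P_k$ yields an expansion inside $P_{k,n-k}$) is insensitive to which triangular expansion we start with. The only ingredient from that previous proof that we re-use is the observation that, with $w = ((n-k)^k)$ the rectangle, we have the equivalence $\lambda \in P_{k,n-k} \iff \lambda \unlhd w$ for $\lambda \in P_k$: one direction follows by comparing partial sums against $w$, and the other from reading off $\lambda_1 \leq w_1 = n-k$ from the first partial sum, combined with $\ell(\lambda)\leq k$.

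The main step is then to apply (\ref{equExpandEinS}), which gives
\[
e_{\lambda'} = \sum_{\substack{\mu \in P_k(i) \\ \mu \unlhd \lambda}} M(e,s)_{\lambda,\mu}\, s_\mu
\]
with $M(e,s)_{\lambda,\lambda} = 1$, and to observe that for $\lambda \in P_{k,n-k}$ every $\mu$ appearing on the right is again in $P_{k,n-k}$: indeed, $\mu \unlhd \lambda \unlhd w$ forces $\mu \unlhd w$, and since $\mu \in P_k$, the equivalence above puts $\mu \in P_{k,n-k}$. This yields the desired unitriangular expansion inside $\{s_\nu\}_{\nu \in P_{k,n-k}}$.

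No step looks genuinely hard here; the mild subtlety is bookkeeping on the conjugation, but notice that the indexing convention in (\ref{equExpandEinS}) already writes $e_{\lambda'}$ in terms of $s_\mu$ with $\mu$ (not $\mu'$) running over partitions $\unlhd \lambda$, so the set $P_{k,n-k}$ we wish to preserve is the one measured \emph{before} conjugation, which is exactly the side on which the majorization comparison lives. Finally, because (\ref{equExpandEinS}) is stated (and cited from \cite[Table~1]{macdonald}) over any commutative ring $\mathcal{R}$, the argument is valid in the generality asserted by the footnote, with no $\mathbb{Q}$-algebra hypothesis required.
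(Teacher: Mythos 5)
Your proof is correct and follows the paper's approach exactly: the paper's proof of this lemma simply cites that the argument is identical to Lemma~\ref{lemSinMTriangularityPkn-k} with (\ref{equExpandEinS}) in place of (\ref{equExpandSinM}), which is precisely the substitution you carry out. Your extra remarks on the conjugation bookkeeping and on the generality over an arbitrary commutative ring are accurate and consistent with the paper's footnote.
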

\begin{proof}
This is identical to Lemma~\ref{lemSinMTriangularityPkn-k} except with (\ref{equExpandEinS}) instead of (\ref{equExpandSinM}).
\end{proof}

\begin{corollary}
\label{corMixedBasisElambdaPmu}
The set
$$
\{e_{\lambda'} p_\mu \,|\, \lambda\in P_{k,n-k}\text{, } \mu\in Q_{n-k+1,n}\}
$$
is an $\mathcal{R}$-basis for $\mathcal{S}$.
\end{corollary}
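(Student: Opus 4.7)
My plan is to follow the proof of Corollary~\ref{corMixedBasisSlambdaPmu} almost verbatim, with Lemma~\ref{lemEinSTriangularityPkn-k} playing the role that Lemma~\ref{lemSinMTriangularityPkn-k} played there. The heavy lifting for this section is already done: Proposition~\ref{propMixedBasisMlambdaPmu} establishes the $m$-$p$ mixed basis via the cardinality computation and the Newton--Girard spanning argument, and Corollary~\ref{corMixedBasisSlambdaPmu} then converts that into the $s$-$p$ mixed basis by a module-theoretic change of variables. What remains is exactly one more such change of variables, from $s_\nu$ to $e_{\nu'}$.

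Concretely, Corollary~\ref{corMixedBasisSlambdaPmu} tells us that $\{s_\nu \,|\, \nu\in P_{k,n-k}\}$ is a basis for $\mathcal{S}$ viewed as a module over $\textnormal{span}_\mathcal{R}(p_\mu \,|\, \mu\in Q_{n-k+1,n})$. By Lemma~\ref{lemEinSTriangularityPkn-k}, the family $\{e_{\nu'}\}_{\nu\in P_{k,n-k}}$ expands invertibly triangularly in $\{s_\nu\}_{\nu\in P_{k,n-k}}$ under the majorization ordering, with coefficients in $\mathcal{R}$ (and in particular in the coefficient module above). Invoking \cite[Corollary~11.1.19~(e)]{expansions} one more time then replaces $\{s_\nu\}_{\nu\in P_{k,n-k}}$ with $\{e_{\nu'}\}_{\nu\in P_{k,n-k}}$ as a basis for the same module, and the corollary follows immediately.

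The main obstacle: essentially none. Every piece is already assembled — the dimension count in Lemma~\ref{lemDimMixedBasisMlambdaPmuWorks}, the Newton--Girard spanning argument in Lemma~\ref{lemMixedBasisMPContainsMlambda}, and the triangularity statement in Lemma~\ref{lemEinSTriangularityPkn-k} (which is in turn essentially (\ref{equExpandEinS}) restricted to $P_{k,n-k}$). The only thing worth double-checking is that the triangularity in Lemma~\ref{lemEinSTriangularityPkn-k} is unitriangular with coefficients in $\mathcal{R}$, so that it is genuinely invertible inside the larger coefficient module; this is built into the statement of (\ref{equExpandEinS}) (with $M(e,s)_{\lambda,\lambda}=1$), so the appeal to \cite[Corollary~11.1.19~(e)]{expansions} is legitimate and the proof is a direct parallel of the previous corollary.
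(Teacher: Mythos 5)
Your proof is correct and follows exactly the same route as the paper: apply Corollary~\ref{corMixedBasisSlambdaPmu} to realize $\{s_\nu\}_{\nu\in P_{k,n-k}}$ as a module basis over $\textnormal{span}_\mathcal{R}(p_\mu)$, then invoke Lemma~\ref{lemEinSTriangularityPkn-k} together with \cite[Corollary~11.1.19~(e)]{expansions} to swap $s_\nu$ for $e_{\nu'}$. The paper states this more tersely (``identical to Corollary~\ref{corMixedBasisSlambdaPmu} except\ldots''), but the content is the same.
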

\begin{proof}
This is identical to Corollary~\ref{corMixedBasisSlambdaPmu} except with $e_{\nu'}$ instead of $s_\nu$, with $s_\nu$ instead of $m_\nu$, and with Lemma~\ref{lemEinSTriangularityPkn-k} instead of Lemma~\ref{lemSinMTriangularityPkn-k}.
\end{proof}

\begin{remark}
\label{rmkHalfofThmMain}
Corollary~\ref{corMixedBasisSlambdaPmu}, Proposition~\ref{propMixedBasisMlambdaPmu}, and Corollary~\ref{corMixedBasisElambdaPmu} prove half of Theorem~\ref{thmMain}.
\end{remark}

\section{Quotients with Power Sum Symmetric Polynomials}

For this section let $\mathcal{R}$ be a commutative $\mathbb{Q}$-algebra.  Recall that in the definition of $\mathcal{J}$ we had $a_i\in\mathcal{S}$ with $\deg(a_{n-k+i})<n-k+i$ for $i=1,\dots,k$.  Set $b_{n-k+i}=a_i$ (this is just a reindexing). In particular, $\deg(b_{n-k+i})<n-k+i$ and
$$
\mathcal{J} = \langle p_{n-k+1}-b_{n-k+1},\dots,p_n-b_n \rangle.
$$

First, we create slightly more general versions of the bases from Corollary~\ref{corMixedBasisSlambdaPmu}, Proposition~\ref{propMixedBasisMlambdaPmu}, and Corollary~\ref{corMixedBasisElambdaPmu}.

\begin{corollary}
\label{corMixedBasesSMElambdaPmuAi}
The sets
\begin{align*}
\left\{\left.s_\lambda \prod_{j=1}^{\ell(\mu)} (p_{\mu_j}-b_{\mu_j}) \,\right|\, \lambda\in P_{k,n-k}\text{, } \mu\in Q_{n-k+1,n}\right\}&,
\\
\left\{\left.m_\lambda \prod_{j=1}^{\ell(\mu)} (p_{\mu_j}-b_{\mu_j}) \,\right|\, \lambda\in P_{k,n-k}\text{, } \mu\in Q_{n-k+1,n}\right\}&,
\\
\text{and }\left\{\left.e_{\lambda'} \prod_{j=1}^{\ell(\mu)} (p_{\mu_j}-b_{\mu_j}) \,\right|\, \lambda\in P_{k,n-k}\text{, } \mu\in Q_{n-k+1,n}\right\}&
\end{align*}
are $\mathcal{R}$-bases for $\mathcal{S}$.
\end{corollary}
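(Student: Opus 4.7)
The plan is to reduce each of the three new sets to the corresponding basis from Corollary~\ref{corMixedBasisSlambdaPmu}, Proposition~\ref{propMixedBasisMlambdaPmu}, or Corollary~\ref{corMixedBasisElambdaPmu} via a filtered triangularity argument; the three cases are handled identically, so I focus on the Schur case. The key observation is that since each $b_{\mu_j}$ has degree strictly less than $\mu_j$, expanding the product yields
$$
s_\lambda \prod_{j=1}^{\ell(\mu)} (p_{\mu_j} - b_{\mu_j}) = s_\lambda p_\mu + r_{\lambda, \mu},
$$
where $r_{\lambda, \mu}$ is a sum of terms each of total degree strictly less than $|\lambda| + |\mu|$ (in each factor, replacing $p_{\mu_j}$ by $b_{\mu_j}$ strictly drops the degree of that factor). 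Thus, in the filtration $\mathcal{F}_N := \mathcal{S}_0 \oplus \cdots \oplus \mathcal{S}_N$, the new element indexed by $(\lambda, \mu)$ lies in $\mathcal{F}_N$ for $N = |\lambda|+|\mu|$, and modulo $\mathcal{F}_{N-1}$ it equals the undeformed element $s_\lambda p_\mu$.

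From here I would induct on $M \geq 0$ to show that the set $V_{\leq M}$ consisting of those new elements with $|\lambda|+|\mu| \leq M$ is an $\mathcal{R}$-basis for $\mathcal{F}_M$. The base case $M = 0$ gives $V_{\leq 0} = \{1\}$, which is a basis for $\mathcal{F}_0 = \mathcal{R}$. For the inductive step, assume $V_{\leq M-1}$ is an $\mathcal{R}$-basis for $\mathcal{F}_{M-1}$. Then for each $(\lambda, \mu)$ with $|\lambda|+|\mu| = M$, the correction $r_{\lambda, \mu}$ lies in $\mathcal{F}_{M-1}$ and can be expanded in $V_{\leq M-1}$. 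Subtracting these expansions transforms $V_{\leq M}$ into $V_{\leq M-1} \cup \{s_\lambda p_\mu : |\lambda|+|\mu| = M\}$ through $\mathcal{R}$-invertible (in fact, unitriangular) row operations. Since Corollary~\ref{corMixedBasisSlambdaPmu} restricts degree-wise to give a basis for each $\mathcal{S}_M$, the combined set is a basis for $\mathcal{F}_M = \mathcal{F}_{M-1} \oplus \mathcal{S}_M$, hence so is $V_{\leq M}$. Taking the union over $M$ produces the desired basis for $\mathcal{S} = \bigcup_M \mathcal{F}_M$; the $m_\lambda$ and $e_{\lambda'}$ cases follow identically, invoking Proposition~\ref{propMixedBasisMlambdaPmu} or Corollary~\ref{corMixedBasisElambdaPmu} in place of Corollary~\ref{corMixedBasisSlambdaPmu}.

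I expect the only subtlety to be the bookkeeping around ``top-degree part'' given that each $b_{\mu_j}$ need not be homogeneous, and cleanly articulating the unitriangularity of the change of variables when elements are organized by the total-degree filtration (rather than degree-by-degree, as in previous sections). Once these points are in place, the argument is purely formal and requires no additional identities beyond the hypothesis $\deg(b_{\mu_j}) < \mu_j$.
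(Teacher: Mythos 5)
Your proof is correct and takes essentially the same approach as the paper: both exploit that each $b_{\mu_j}$ has degree strictly less than $\mu_j$, so the deformed element $s_\lambda\prod_j(p_{\mu_j}-b_{\mu_j})$ equals $s_\lambda p_\mu$ plus terms of strictly smaller total degree, and conclude that the change of basis is invertibly triangular under the filtration by total degree. The paper simply compresses your explicit induction on the degree truncations $\mathcal{F}_M=\mathcal{S}_0\oplus\cdots\oplus\mathcal{S}_M$ into a citation of \cite[Corollary~11.1.19~(e)]{expansions} applied to the partial order $(\lambda,\mu)<(\rho,\tau)\iff|\lambda|+|\mu|<|\rho|+|\tau|$.
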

\begin{proof}
It suffices to show that the members of the sets, when restricted to those with degree at most $i$, are a basis for $\mathcal{S}_0\oplus\cdots\oplus\mathcal{S}_i$.  In this sense, the first set expands invertibly triangularly in the basis from Corollary~\ref{corMixedBasisSlambdaPmu} under the partial ordering $(\lambda,\mu)<(\rho,\tau)\iff |\lambda|+|\mu|<|\rho|+|\tau|$.  The second set expands invertibly triangularly in the basis from Proposition~\ref{propMixedBasisMlambdaPmu} under the same ordering.  The third set expands invertibly triangularly in the basis from Corollary~\ref{corMixedBasisElambdaPmu} under the same ordering.  Then all three sets are bases by \cite[Corollary~11.1.19~(e)]{expansions}.
\end{proof}

\begin{corollary}
\label{corSMEWeinQuoBasis}
The sets
\begin{align*}
\{s_\lambda\,&|\,\lambda\in P_{k,n-k}\},
\\
\{m_\lambda\,&|\,\lambda\in P_{k,n-k}\},
\\
\text{and } \{e_{\lambda'}\,&|\,\lambda\in P_{k,n-k}\}
\end{align*}
are $\mathcal{R}$-bases for $\mathcal{S}/\mathcal{J}$.
\end{corollary}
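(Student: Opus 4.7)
The plan is to read Corollary~\ref{corMixedBasesSMElambdaPmuAi} as giving a splitting of $\mathcal{S}$ as an $\mathcal{R}$-module, one direct summand of which is exactly the ideal $\mathcal{J}$. Fix any one of the three bases $B$ of $\mathcal{S}$ produced by that corollary, and partition it as $B = B_0 \sqcup B_+$, where $B_0$ consists of the basis elements with $\mu = \emptyset$ (so $B_0$ is one of $\{s_\lambda \mid \lambda \in P_{k,n-k}\}$, $\{m_\lambda \mid \lambda \in P_{k,n-k}\}$, or $\{e_{\lambda'} \mid \lambda \in P_{k,n-k}\}$) and $B_+$ consists of those with $\mu \neq \emptyset$. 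Then $\mathcal{S} = \textnormal{span}_\mathcal{R}(B_0) \oplus \textnormal{span}_\mathcal{R}(B_+)$ as $\mathcal{R}$-modules, so it will be enough to show $\textnormal{span}_\mathcal{R}(B_+) = \mathcal{J}$; this identifies $\mathcal{S}/\mathcal{J}$ canonically with $\textnormal{span}_\mathcal{R}(B_0)$ and produces the desired basis.

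The inclusion $\textnormal{span}_\mathcal{R}(B_+) \subseteq \mathcal{J}$ is immediate: every element of $B_+$ contains at least one factor $p_{\mu_j} - b_{\mu_j}$ with $n-k+1 \leq \mu_j \leq n$, and each such factor is a generator of $\mathcal{J}$. For the reverse inclusion, it suffices to handle a typical generator $c(p_{n-k+i} - b_{n-k+i})$ with $c \in \mathcal{S}$ and $1 \leq i \leq k$. Expanding $c$ in $B$ and distributing, each summand has the form
$$
r \cdot \alpha \cdot \prod_{j=1}^{\ell(\mu)} (p_{\mu_j} - b_{\mu_j}) \cdot (p_{n-k+i} - b_{n-k+i})
$$
for some $r \in \mathcal{R}$, some basis factor $\alpha$ (equal to $s_\lambda$, $m_\lambda$, or $e_{\lambda'}$ according to the choice of $B$, with $\lambda \in P_{k,n-k}$), and some $\mu \in Q_{n-k+1,n}$. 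Inserting $n-k+i$ into $\mu$ in the appropriate position yields a new partition $\mu' \in Q_{n-k+1,n}$ with $\ell(\mu') = \ell(\mu)+1$, and by commutativity the summand equals $r \cdot \alpha \cdot \prod_{j=1}^{\ell(\mu')} (p_{\mu'_j} - b_{\mu'_j})$, which is $r$ times an element of $B_+$. Summing, $c(p_{n-k+i} - b_{n-k+i}) \in \textnormal{span}_\mathcal{R}(B_+)$.

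There is essentially no obstacle once Corollary~\ref{corMixedBasesSMElambdaPmuAi} is in hand: the whole point is that the bases there were engineered so that multiplying any basis element by a generator of $\mathcal{J}$ yields another basis element, with no cancellation or triangular unraveling required. The three parallel cases (for $s$, $m$, and $e'$) are handled uniformly by the same argument applied to the three choices of $B$.
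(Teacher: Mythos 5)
Your proposal is correct and follows essentially the same route as the paper: the paper's proof also takes $B$ from Corollary~\ref{corMixedBasesSMElambdaPmuAi}, splits off the $\ell(\mu)\geq 1$ portion as an $\mathcal{R}$-basis for $\mathcal{J}$, and concludes. The only difference is that the paper asserts the identity $\textnormal{span}_\mathcal{R}(B_+)=\mathcal{J}$ without comment, whereas you supply the two inclusions explicitly (the ``insert $n-k+i$ into $\mu$'' step), which is a welcome amount of extra detail rather than a genuinely different argument.
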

\begin{proof}
From Corollary~\ref{corMixedBasesSMElambdaPmuAi} we see that three $\mathcal{R}$-bases for $\mathcal{J}$ are
\begin{align*}
\left\{\left.s_\lambda \prod_{j=1}^{\ell(\mu)} (p_{\mu_j}-b_{\mu_j})\,\right|\, \lambda\in P_{k,n-k},\ \mu \in Q_{n-k+1,n}, \ell(\mu)\geq 1\right\}&,
\\
\left\{\left.m_\lambda \prod_{j=1}^{\ell(\mu)} (p_{\mu_j}-b_{\mu_j})\,\right|\, \lambda\in P_{k,n-k},\ \mu \in Q_{n-k+1,n}, \ell(\mu)\geq 1\right\}&,
\\
\text{and }\left\{\left.e_{\lambda'} \prod_{j=1}^{\ell(\mu)} (p_{\mu_j}-b_{\mu_j})\,\right|\, \lambda\in P_{k,n-k},\ \mu \in Q_{n-k+1,n}, \ell(\mu)\geq 1\right\}&.
\end{align*}
Therefore, by Corollary~\ref{corMixedBasesSMElambdaPmuAi}, three bases for $\mathcal{S}/\mathcal{J}$ are 
\begin{align*}
\{s_\lambda\,&|\,\lambda\in P_{k,n-k}\},
\\
\{m_\lambda\,&|\,\lambda\in P_{k,n-k}\},
\\
\text{and } \{e_{\lambda'}\,&|\,\lambda\in P_{k,n-k}\}.
\end{align*}
\end{proof}

\section{Bases with Complete Homogeneous Symmetric Polynomials}

For this section let $\mathcal{R}$ be any commutative ring.  Fix some positive integer $k$.  We set $\mathcal{S}=\mathcal{R}[x_1,\dots,x_k]^{\mathfrak{S}_k}$ to be the ring of symmetric polynomials in $\mathcal{R}[x_1,\dots,x_k]$.

Fix some integer $n\geq k$.

\begin{remark}
We can do with the complete homogeneous symmetric polynomials almost exactly what we did with the power sum symmetric polynomials with only a few minor changes, such as exchanging the monomial and Schur symmetric polynomials and using the Jacobi-Trudi identity in the analogue of Lemma~\ref{lemMixedBasisMPContainsMlambda}.
\end{remark}

\begin{theorem}
\label{thmMixedBasisSlambdaHmu}
The set
$$
\{s_\lambda h_\mu \,|\, \lambda\in P_{k,n-k}\text{, } \mu\in Q_{n-k+1,n}\}
$$
is an $\mathcal{R}$-basis for $\mathcal{S}$.
\end{theorem}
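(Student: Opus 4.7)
The plan is to mirror the development in Section~3, with $s_\lambda$ taking the role previously played by $m_\lambda$ and the $h_i$'s replacing the $p_i$'s throughout. Set
$$W_i = \{s_\lambda h_\mu \mid \lambda\in P_{k,n-k},\ \mu\in Q_{n-k+1,n},\ |\lambda|+|\mu|=i\}.$$
As in Remark~\ref{rmkStrategyForMixedBasisMlambdaPmu}, it suffices to check that $\#W_i = \textnormal{rank}_\mathcal{R}(\mathcal{S}_i)$ and $\textnormal{span}_\mathcal{R}(W_i) = \mathcal{S}_i$ for every $i\geq 0$. The cardinality count is identical to that of Lemma~\ref{lemDimMixedBasisMlambdaPmuWorks}, since it depends only on the index sets $P_{k,n-k}$ and $Q_{n-k+1,n}$.

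Next I would establish the $h$-analogue of Lemma~\ref{lemReducingPViaNewtonGirard}: for every $t\geq n-k+1$ there exist $d_{t,m}^h\in\mathcal{S}_{t-m}$ (for $n-k+1\leq m\leq n$) such that $h_t = \sum_{m=n-k+1}^n d_{t,m}^h\, h_m$. This is a verbatim induction on $t$, with the Kronecker-delta base case $t\in\{n-k+1,\ldots,n\}$ and inductive step using (\ref{equNewtonGirardhe}) in place of (\ref{equNewtonGirardpe}).

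The heart of the proof is the analogue of Lemma~\ref{lemMixedBasisMPContainsMlambda}: for each $i\geq 0$ and $\lambda\in P_k(i)$, $s_\lambda\in \textnormal{span}_\mathcal{R}(W_i)$, proved by induction on $i$. If $\lambda_1\leq n-k$ then $s_\lambda = s_\lambda h_\emptyset\in W_i$. Otherwise $\lambda_1\geq n-k+1$, and the Jacobi-Trudi identity (\ref{equJacobiTrudi}) expands
$$s_\lambda = \sum_{\sigma\in\mathfrak{S}_k}\textnormal{sgn}(\sigma)\, h_{\lambda_1+\sigma(1)-1}\, h_{\lambda_2+\sigma(2)-2}\cdots h_{\lambda_k+\sigma(k)-k}.$$
Since $\sigma(1)\geq 1$, each summand carries a leading factor $h_{\lambda_1+\sigma(1)-1}$ of index $\geq \lambda_1\geq n-k+1$. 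Grouping by this factor gives $s_\lambda=\sum_{t=\lambda_1}^{\lambda_1+k-1} c_t\, h_t$ with $c_t\in\mathcal{S}_{i-t}$. Reducing each $h_t$ via the previous step yields $s_\lambda=\sum_{m=n-k+1}^n C_m\, h_m$ with $C_m\in\mathcal{S}_{i-m}$. Expanding each $C_m$ in the Schur basis $\{s_\nu\}_{\nu\in P_k(i-m)}$ of $\mathcal{S}_{i-m}$ (which is available over any commutative ring) and applying the induction hypothesis completes the argument, because $W_{i-m}\cdot h_m\subseteq W_i$ for $n-k+1\leq m\leq n$.

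The main obstacle is the Jacobi-Trudi step: one must confirm that every summand genuinely carries a factor of index $\geq \lambda_1$ (which follows from $\sigma(1)\geq 1$) so that the reduction lemma applies to pull each product into a single explicit $h_m$ with $m\in\{n-k+1,\ldots,n\}$. Once that is in hand, the remainder is a transparent translation of Section~3 under the dictionary $m\leftrightarrow s$, $p\leftrightarrow h$, power-sum expansion $\leftrightarrow$ Jacobi-Trudi, and (\ref{equNewtonGirardpe}) $\leftrightarrow$ (\ref{equNewtonGirardhe}).
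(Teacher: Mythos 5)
Your proposal mirrors the paper's own proof in Section~5 essentially verbatim: same $V_i$ decomposition, same cardinality count by appeal to Lemma~\ref{lemDimMixedBasisMlambdaPmuWorks}, same $h$-analogue of the reduction lemma via (\ref{equNewtonGirardhe}), and the same use of Jacobi-Trudi to extract a leading factor $h_{\lambda_1+\sigma(1)-1}$ with index at least $\lambda_1 \geq n-k+1$. The only cosmetic difference is that you bound the inner sum by $t \leq \lambda_1+k-1$ while the paper uses $t\leq i$ (both are valid, as the extra coefficients vanish for degree reasons); and you make explicit the step of re-expanding $C_m$ in the Schur basis of $\mathcal{S}_{i-m}$ before invoking the induction hypothesis, which the paper leaves implicit.
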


For this section, set
$$
V_i = \{s_\lambda h_\mu \,|\, \lambda\in P_{k,n-k}\text{, } \mu\in Q_{n-k+1,n} \text{, } |\lambda|+|\mu| = i\}.
$$
This is the restriction of our basis to $\mathcal{S}_i$.

\begin{remark}
\label{rmkStrategyForMixedBasisSlambdaHmu}
Since $s_\lambda h_\mu$ is homogeneous of degree $|\lambda|+|\mu|$, we can prove Theorem~\ref{thmMixedBasisSlambdaHmu} by showing that for $i\geq0$, $V_i$ is an $\mathcal{R}$-basis for $\mathcal{S}_i$.  We will do so by showing that $\#V_i=\textnormal{rank}_\mathcal{R}(\mathcal{S}_i)$ and $\textnormal{span}_\mathcal{R}(V_i)=\mathcal{S}_i$.  Showing that $\#V_i=\textnormal{rank}_\mathcal{R}(\mathcal{S}_i)$ is identical to Lemma~\ref{lemDimMixedBasisMlambdaPmuWorks}, so we only need to establish that $\textnormal{span}_\mathcal{R}(V_i)=\mathcal{S}_i$.  We will do so by showing that $\textnormal{span}_\mathcal{R}(V_i)$ contains every member of the Schur basis of $\mathcal{S}_i$.
\end{remark}

\begin{lemma}
\label{lemReducingHViaNewtonGirard}
Let $t\geq n-k+1$.  Then for some $d_{t,m}\in\mathcal{S}_{t-m}$ (where $n-k+1\leq m\leq n$) we have
$$
h_t = \sum_{m=n-k+1}^n d_{t,m} h_m
$$
\end{lemma}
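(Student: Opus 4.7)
The plan is to mimic the proof of Lemma~\ref{lemReducingPViaNewtonGirard} verbatim, swapping every occurrence of $p$ with $h$ and using the Newton--Girard identity (\ref{equNewtonGirardhe}) in place of (\ref{equNewtonGirardpe}). The induction is on $t$.

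For the base case $n-k+1 \leq t \leq n$, I would take $d_{t,m} = \delta_{t,m}$ (Kronecker delta), which lies in $\mathcal{R} = \mathcal{S}_0 = \mathcal{S}_{t-m}$ when $t=m$ and is zero otherwise, giving the trivial identity $h_t = h_t$. For the induction step $t > n$, I would first note that $t > n \geq k$, so (\ref{equNewtonGirardhe}) applies and yields
\[
h_t = \sum_{j=1}^k (-1)^{j+1} e_j h_{t-j}.
\]
Since $j \leq k$ and $t > n$, each index $t - j$ satisfies $t - j \geq n - k + 1$, so the inductive hypothesis applies to each $h_{t-j}$. Substituting and swapping the order of summation gives
\[
h_t = \sum_{m=n-k+1}^n \left( \sum_{j=1}^k (-1)^{j+1} e_j\, d_{t-j,m} \right) h_m,
\]
so I would define $d_{t,m} = \sum_{j=1}^k (-1)^{j+1} e_j\, d_{t-j,m}$. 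A quick degree check: $e_j \in \mathcal{S}_j$ and $d_{t-j,m} \in \mathcal{S}_{t-j-m}$ by induction, so $e_j d_{t-j,m} \in \mathcal{S}_{t-m}$ and the sum lies in $\mathcal{S}_{t-m}$, as required.

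There is no real obstacle here; the argument is structurally the same as Lemma~\ref{lemReducingPViaNewtonGirard} because the two Newton--Girard identities (\ref{equNewtonGirardpe}) and (\ref{equNewtonGirardhe}) have an identical shape (same coefficients $(-1)^{j+1} e_j$, same range $j = 1, \dots, k$, same applicability condition $i > k$). In particular, the only place where the specific symmetric polynomials mattered in the previous proof was the appeal to (\ref{equNewtonGirardpe}), and it is replaced here by (\ref{equNewtonGirardhe}) with no further changes to the degree bookkeeping or to the index ranges.
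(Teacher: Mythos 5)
Your proof is correct and is essentially identical to the paper's: the same induction on $t$, the same base case with the Kronecker delta, and the same inductive step using (\ref{equNewtonGirardhe}) with the same degree bookkeeping. The extra sanity checks you include (that $t>n\geq k$ makes the identity applicable and that $t-j\geq n-k+1$ makes the inductive hypothesis applicable) are implicit in the paper but worth making explicit.
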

\begin{proof}
We will induct on $t$.

Base case $n-k+1\leq t\leq n$: We can just take $d_{t,m}=\delta_{t,m}$ where $\delta$ is the Kronecker delta.

Induction step $t>n$: Suppose that Lemma~\ref{lemReducingHViaNewtonGirard} holds for all smaller $t\geq n-k+1$. From (\ref{equNewtonGirardhe}), we have
\begin{align*}
h_t &= \sum_{j=1}^k (-1)^{j+1}e_j h_{t-j}
\\
&= \sum_{j=1}^k (-1)^{j+1}e_j \sum_{m=n-k+1}^n d_{t-j,m} h_m
\\
&= \sum_{m=n-k+1}^n \left( \sum_{j=1}^k (-1)^{j+1}e_j d_{t-j,m} \right) h_m
\end{align*}
so we can take
$$
d_{t,m} = \sum_{j=1}^k (-1)^{j+1}e_j d_{t-j,m}
$$
which is in $\mathcal{S}_{j+t-j-m}=\mathcal{S}_{t-m}$ as desired.
\end{proof}

\begin{lemma}
\label{lemMixedBasisSHContainsSlambda}
Let $i\geq 0$ and $\lambda\in P_k(i)$.  Then $s_\lambda\in\textnormal{span}_\mathcal{R} V_i$.
\end{lemma}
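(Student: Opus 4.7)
The plan is to mirror Lemma~\ref{lemMixedBasisMPContainsMlambda}, replacing monomial symmetric polynomials with Schur polynomials, power sums with complete homogeneous polynomials, and the $m_\nu$-to-$p_\nu$ transition formula in $\Lambda$ with the Jacobi-Trudi identity (\ref{equJacobiTrudi}). I would induct on $i$. For the base case $i \leq n-k$, any $\lambda \in P_k(i)$ satisfies $\lambda_1 \leq i \leq n-k$, so $\lambda \in P_{k,n-k}$ and $s_\lambda = s_\lambda h_\emptyset \in V_i$. For the induction step $i > n-k$, if $\lambda_1 \leq n-k$ we are done the same way, so assume $\lambda_1 \geq n-k+1$.

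The central step is to extract the analogue of the claim inside Lemma~\ref{lemMixedBasisMPContainsMlambda}: namely, that
$$s_\lambda = \sum_{t=\lambda_1}^{\lambda_1+k-1} c_t\, h_t,$$
with $c_t \in \mathcal{S}_{i-t}$. This falls out directly from Laplace-expanding the Jacobi-Trudi determinant $\det(h_{\lambda_i+j-i})_{i,j=1}^k$ along its first row $(h_{\lambda_1}, h_{\lambda_1+1}, \ldots, h_{\lambda_1+k-1})$; the coefficient $c_t$ is the appropriate signed $(k-1)\times(k-1)$ minor, and homogeneity forces it into $\mathcal{S}_{i-t}$ (terms with $t>i$ vanish). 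Every $t$ in the sum satisfies $t \geq \lambda_1 \geq n-k+1$, so Lemma~\ref{lemReducingHViaNewtonGirard} applies to each $h_t$ and yields
$$s_\lambda = \sum_{m=n-k+1}^n \left(\sum_{t=\lambda_1}^{\lambda_1+k-1} c_t\, d_{t,m}\right) h_m = \sum_{m=n-k+1}^n c'_m\, h_m,$$
with $c'_m \in \mathcal{S}_{i-m}$.

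To finish, I would expand each $c'_m$ in the Schur basis of $\mathcal{S}_{i-m}$ and apply the induction hypothesis (valid since $i-m < i$) to each $s_\nu$ appearing, writing $c'_m$ as an $\mathcal{R}$-linear combination of $V_{i-m}$. Multiplying by $h_m$ for $n-k+1 \leq m \leq n$ sends $V_{i-m}$ into $V_i$, since appending a part $m$ to a partition $\mu \in Q_{n-k+1,n}$ and re-sorting keeps it in $Q_{n-k+1,n}$. This exhibits $s_\lambda \in \textnormal{span}_\mathcal{R}(V_i)$ and closes the induction. The only genuinely new ingredient relative to Lemma~\ref{lemMixedBasisMPContainsMlambda} is the Jacobi-Trudi expansion, and since the first row of the Jacobi-Trudi matrix consists entirely of $h_t$'s with $t \geq \lambda_1$, no real obstacle arises; the remaining machinery (analogue of the Newton-Girard reduction and the inductive expansion of the coefficients) is a structural copy of the power sum case.
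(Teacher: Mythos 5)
Your proposal is correct and follows essentially the same route as the paper: same induction on $i$, same base case, same claim (write $s_\lambda$ as an $\mathcal{S}$-linear combination of $h_t$'s with $t \geq \lambda_1$ via Jacobi-Trudi), same reduction via Lemma~\ref{lemReducingHViaNewtonGirard}, and same bookkeeping of degrees to close the induction. The only cosmetic difference is that you phrase the extraction of the $c_t$'s as a Laplace expansion along the first row of the Jacobi-Trudi determinant (so $t$ runs from $\lambda_1$ to $\lambda_1+k-1$, with out-of-range terms vanishing by homogeneity), while the paper expands the determinant fully over $\mathfrak{S}_k$ and then factors out $h_{\lambda_1+\sigma(1)-1}$ from each summand (letting $t$ run from $\lambda_1$ to $i$); these are the same computation.
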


\begin{proof}
We will induct on $i$.

Base case $i\leq n-k$: We have $\lambda_1\leq i\leq n-k$ so $s_\lambda=s_\lambda h_\emptyset \in V_i$.

Induction step $i>n-k$:  Suppose that Lemma~\ref{lemMixedBasisSHContainsSlambda} holds for all smaller $i$.  Let $\lambda\in P_k(i)$.  If $\lambda_1 \leq n-k$ then $s_\lambda=s_\lambda h_\emptyset \in V_i$, so let $\lambda_1\geq n-k+1$.

\begin{claim*}
In $\mathcal{S}_i$, for some $c_t\in \mathcal{S}_{i-t}$, we have
$$
s_\lambda = \sum_{t=\lambda_1}^i c_t h_t.
$$
\end{claim*}
\begin{proof}[Proof of the claim]
The Jacobi-Trudi identity (\ref{equJacobiTrudi}) states that
\begin{align}
\label{equExpandedJacobiTrudi}
s_\lambda &= \sum_{\sigma\in \mathfrak{S}_k} \text{sgn}(\sigma) h_{\lambda_1+\sigma(1)-1}\cdots h_{\lambda_k+\sigma(k)-k}
\end{align}
where $\mathfrak{S}_k$ is the symmetric group on $\{1,\dots,k\}$.  Since
$$
\deg(h_{\lambda_1+\sigma(1)-1}\cdots h_{\lambda_k+\sigma(k)-k})
= \sum_{i=1}^{k} \lambda_i + \sum_{j=1}^k j - \sum_{j=1}^k j
=
|\lambda|
=
i
$$
we see that each term in the sum (\ref{equExpandedJacobiTrudi}) is in $\mathcal{S}_i$.  Since $\lambda_1 \leq \lambda_1+\sigma(1)-1$ we can just factor $h_{\lambda_1+\sigma(1)-1}$ out of each term and recollect to obtain the desired expansion (there will be no $h_i$'s with $i>|\lambda|$ since then the corresponding term in (\ref{equExpandedJacobiTrudi}) would not be in $\mathcal{S}_i$).
\end{proof}

Now we can write that
$$
s_\lambda = \sum_{t=\lambda_1}^i c_t h_t
$$
where $c_t\in \mathcal{S}_{i-t}$.  Applying Lemma~\ref{lemReducingHViaNewtonGirard} to each $t$ in the sum (since $t\geq\lambda_1\geq n-k+1$) gives
\begin{align*}
s_\lambda &= \sum_{t=\lambda_1}^i c_t h_t
\\
&= \sum_{t=\lambda_1}^i c_t \sum_{m=n-k+1}^n d_{t,m} h_m
\\
&= \sum_{m=n-k+1}^n \left( \sum_{t=\lambda_1}^i c_t d_{t,m} \right) h_m.
\end{align*}
where $d_{t,m}\in\mathcal{S}_{t-m}$.  Note that $c_t d_{t,m}\in\mathcal{S}_{i-t+t-m}=\mathcal{S}_{i-m}$.  Since $i-m<i$, by our induction hypothesis, we can expand $c_t d_{t,m}$ as an $\mathcal{R}$-linear combination of $V_{i-m}$.  Then since multiplication by $h_m$, $n-k+1\leq m\leq n$, converts a member of $V_{i-m}$ into a member of $V_i$, we see that we have expanded $s_\lambda$ as an $\mathcal{R}$-linear combination of elements of $V_i$.  This concludes the induction step.
\end{proof}

\begin{proof}[Proof of Theorem~\ref{thmMixedBasisSlambdaHmu}]
Recall Remark~\ref{rmkStrategyForMixedBasisSlambdaHmu}.  In Lemma~\ref{lemMixedBasisSHContainsSlambda} we showed that if $\lambda\in P_k(i)$ then $s_\lambda\in \textnormal{span}_\mathcal{R}(V_i)$.  It follows that $\textnormal{span}_\mathcal{R}(V_i)=\mathcal{S}_i$, so $V_i$ is an $\mathcal{R}$-basis for $\mathcal{S}_i$ and the result follows.
\end{proof}

Now we will show that the $s_\lambda$'s of \ref{thmMixedBasisSlambdaHmu} may be replaced by $m_\lambda$'s or by $e_{\lambda'}$'s.

\begin{corollary}
\label{corMixedBasisMlambdaHmu}
The set
$$
\{m_\lambda h_\mu \,|\, \lambda\in P_{k,n-k}\text{, } \mu\in Q_{n-k+1,n}\}
$$
is an $\mathcal{R}$-basis for $\mathcal{S}$.
\end{corollary}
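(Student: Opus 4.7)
The plan is to mimic the passage from Proposition~\ref{propMixedBasisMlambdaPmu} to Corollary~\ref{corMixedBasisSlambdaPmu}, but with the roles of $m_\lambda$ and $s_\lambda$ swapped.  In that earlier argument the known $m$-basis was converted into an $s$-basis by invertible triangularity in the single-variable transition between Schur and monomial symmetric polynomials; here we already have the $s$-basis $\{s_\lambda h_\mu\}$ from Theorem~\ref{thmMixedBasisSlambdaHmu}, and the same triangularity will convert it into the desired $m$-basis.  The key observation is that Theorem~\ref{thmMixedBasisSlambdaHmu} can be restated as saying that $\{s_\nu\}_{\nu\in P_{k,n-k}}$ is a basis for $\mathcal{S}$ regarded as a free module over the coefficient ring $A:=\textnormal{span}_\mathcal{R}(h_\mu\,|\,\mu\in Q_{n-k+1,n})$, so replacing the $s_\nu$'s by the $m_\nu$'s just amounts to changing basis within that module.

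In detail, I would invoke Lemma~\ref{lemSinMTriangularityPkn-k} (in its commutative-ring form, as the footnote anticipates) to conclude that $\{s_\nu\}_{\nu\in P_{k,n-k}}$ expands invertibly triangularly in $\{m_\nu\}_{\nu\in P_{k,n-k}}$ under $\unlhd$.  Applying \cite[Corollary~11.1.19~(a)]{expansions} inverts this relation, yielding that $\{m_\nu\}_{\nu\in P_{k,n-k}}$ expands invertibly triangularly in $\{s_\nu\}_{\nu\in P_{k,n-k}}$ under $\unlhd$.  Then \cite[Corollary~11.1.19~(e)]{expansions}, used with $A$ in place of $\mathcal{R}$, transports the basis property across this triangular change of variables, showing that $\{m_\nu\}_{\nu\in P_{k,n-k}}$ is an $A$-basis of $\mathcal{S}$ as well.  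Unpacking what it means to be an $A$-basis recovers exactly the claim that $\{m_\lambda h_\mu\,|\,\lambda\in P_{k,n-k},\ \mu\in Q_{n-k+1,n}\}$ is an $\mathcal{R}$-basis of $\mathcal{S}$.

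There is no substantive obstacle: once Theorem~\ref{thmMixedBasisSlambdaHmu} is in hand, the remainder is pure linear algebra over the auxiliary ring $A$, using only the general triangularity toolkit from \cite{expansions}.  The one point that needs a moment's care is the ring of scalars: Lemma~\ref{lemSinMTriangularityPkn-k} must be applied in its commutative-ring version (which is why that lemma was stated with the footnote allowing arbitrary $\mathcal{R}$), and \cite[Corollary~11.1.19~(e)]{expansions} must be invoked with coefficient ring $A$ rather than $\mathcal{R}$.  With that bookkeeping observed, the proof is essentially a one-line deduction from Theorem~\ref{thmMixedBasisSlambdaHmu} and Lemma~\ref{lemSinMTriangularityPkn-k}, mirroring Corollary~\ref{corMixedBasisSlambdaPmu} verbatim.
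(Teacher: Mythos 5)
Your argument is correct and matches the paper's proof essentially verbatim: both restate Theorem~\ref{thmMixedBasisSlambdaHmu} as giving an $A$-basis of $\mathcal{S}$ over $A=\textnormal{span}_\mathcal{R}(h_\mu\,|\,\mu\in Q_{n-k+1,n})$, then apply Lemma~\ref{lemSinMTriangularityPkn-k} together with \cite[Corollary~11.1.19~(e)]{expansions} to trade $s_\nu$ for $m_\nu$. Your extra appeal to \cite[Corollary~11.1.19~(a)]{expansions} to invert the triangularity simply makes explicit what the paper leaves implicit in the phrase ``with $s_\nu$ and $m_\nu$ switched,'' so the reasoning is the same.
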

\begin{proof}
This is identical to Corollary~\ref{corMixedBasisSlambdaPmu} except with Theorem~\ref{thmMixedBasisSlambdaHmu} instead of Proposition~\ref{propMixedBasisMlambdaPmu}, with $s_\nu$ and $m_\nu$ switched, and with $h_\mu$ instead of $p_\mu$.
\end{proof}

\begin{corollary}
\label{corMixedBasisElambdaHmu}
The set
$$
\{e_{\lambda'} h_\mu \,|\, \lambda\in P_{k,n-k}\text{, } \mu\in Q_{n-k+1,n}\}
$$
is an $\mathcal{R}$-basis for $\mathcal{S}$.
\end{corollary}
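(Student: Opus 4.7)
The plan is to follow the identical pattern used for Corollary~\ref{corMixedBasisElambdaPmu} in Section 3, with the complete homogeneous side replacing the power sum side. The structural setup is exactly the same: Theorem~\ref{thmMixedBasisSlambdaHmu} shows that $\{s_\lambda h_\mu \mid \lambda\in P_{k,n-k},\ \mu\in Q_{n-k+1,n}\}$ is an $\mathcal{R}$-basis for $\mathcal{S}$, which is equivalent to saying that $\{s_\nu\}_{\nu\in P_{k,n-k}}$ is a basis for $\mathcal{S}$ viewed as a free module over $\textnormal{span}_\mathcal{R}(h_\mu \mid \mu\in Q_{n-k+1,n})$, in the sense of \cite[Corollary~11.1.19]{expansions}.

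Next, I would invoke Lemma~\ref{lemEinSTriangularityPkn-k}, which the author has already noted holds over any commutative ring $\mathcal{R}$, matching the weaker ring hypothesis of Section 5. This lemma states that $\{e_{\nu'}\}_{\nu\in P_{k,n-k}}$ expands invertibly triangularly in $\{s_\nu\}_{\nu\in P_{k,n-k}}$ under majorization. Applying \cite[Corollary~11.1.19(e)]{expansions} then transfers the basis property from $\{s_\nu\}$ to $\{e_{\nu'}\}$ in the same module over $\textnormal{span}_\mathcal{R}(h_\mu)$, which is exactly the statement that $\{e_{\lambda'} h_\mu \mid \lambda\in P_{k,n-k},\ \mu\in Q_{n-k+1,n}\}$ is an $\mathcal{R}$-basis for $\mathcal{S}$.

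There is no genuine obstacle here: the proof is a direct transcription of the argument for Corollary~\ref{corMixedBasisElambdaPmu}, with Theorem~\ref{thmMixedBasisSlambdaHmu} replacing Proposition~\ref{propMixedBasisMlambdaPmu} (and accordingly $s_\nu$ replacing $m_\nu$ as the starting basis to be triangularly converted into $e_{\nu'}$, and $h_\mu$ replacing $p_\mu$ as the auxiliary factor). In fact, the author's preferred one-line style would simply read: \emph{This is identical to Corollary~\ref{corMixedBasisElambdaPmu} except with Theorem~\ref{thmMixedBasisSlambdaHmu} instead of Proposition~\ref{propMixedBasisMlambdaPmu}, with $s_\nu$ instead of $m_\nu$, and with $h_\mu$ instead of $p_\mu$.}
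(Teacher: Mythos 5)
Your proof is correct and uses exactly the same approach as the paper: invoke Theorem~\ref{thmMixedBasisSlambdaHmu} to view $\{s_\nu\}_{\nu\in P_{k,n-k}}$ as a basis for $\mathcal{S}$ over $\textnormal{span}_\mathcal{R}(h_\mu\mid\mu\in Q_{n-k+1,n})$, then apply Lemma~\ref{lemEinSTriangularityPkn-k} (valid over any commutative ring) and \cite[Corollary~11.1.19(e)]{expansions} to transfer the basis property to $\{e_{\nu'}\}$. The only cosmetic difference is that you phrase the one-liner as ``identical to Corollary~\ref{corMixedBasisElambdaPmu}'' while the paper phrases it as ``identical to Corollary~\ref{corMixedBasisSlambdaPmu}'' with an extra substitution listed; both unwind to the same argument.
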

\begin{proof}
This is identical to Corollary~\ref{corMixedBasisSlambdaPmu} except with Theorem~\ref{thmMixedBasisSlambdaHmu} instead of Proposition~\ref{propMixedBasisMlambdaPmu}, with $s_\nu$ instead of $m_\nu$, with $e_{\nu'}$ instead of $s_\nu$, with Lemma~\ref{lemEinSTriangularityPkn-k} instead of Lemma~\ref{lemSinMTriangularityPkn-k}, and with $h_\mu$ instead of $p_\mu$.
\end{proof}

\begin{remark}
Remark~\ref{rmkHalfofThmMain}, Theorem~\ref{thmMixedBasisSlambdaHmu}, Corollary~\ref{corMixedBasisMlambdaHmu}, and Corollary~\ref{corMixedBasisElambdaHmu} prove Theorem~\ref{thmMain}.
\end{remark}

\section{Quotients with Complete Homogeneous Symmetric Polynomials}

For this section let $\mathcal{R}$ be any commutative ring.  Recall that in the definition of $\mathcal{I}$ we had $a_i\in\mathcal{S}$ with $\deg(a_{n-k+i})<n-k+i$ for $i=1,\dots,k$.  Set $b_{n-k+i}=a_i$ (this is just a reindexing). In particular, $\deg(b_{n-k+i})<n-k+i$ and
$$
\mathcal{I} = \langle h_{n-k+1}-b_{n-k+1},\dots,h_n-b_n \rangle.
$$

First, we create slightly more general versions of the bases from Theorem~\ref{thmMixedBasisSlambdaHmu},
Corollary~\ref{corMixedBasisMlambdaHmu}, and Corollary~\ref{corMixedBasisElambdaHmu}.

\begin{corollary}
\label{corMixedBasesSMElambdaHmuAi}
The sets
\begin{align*}
\left\{\left.s_\lambda \prod_{j=1}^{\ell(\mu)} (h_{\mu_j}-b_{\mu_j}) \,\right|\, \lambda\in P_{k,n-k},\ \mu\in Q_{n-k+1,n}\right\}&,
\\
\left\{\left.m_\lambda \prod_{j=1}^{\ell(\mu)} (h_{\mu_j}-b_{\mu_j}) \,\right|\, \lambda\in P_{k,n-k},\ \mu\in Q_{n-k+1,n}\right\}&,
\\
\text{and }\left\{\left.e_{\lambda'} \prod_{j=1}^{\ell(\mu)} (h_{\mu_j}-b_{\mu_j}) \,\right|\, \lambda\in P_{k,n-k},\ \mu\in Q_{n-k+1,n}\right\}&
\end{align*}
are $\mathcal{R}$-bases for $\mathcal{S}$.
\end{corollary}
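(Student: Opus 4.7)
The plan is to reduce Corollary~\ref{corMixedBasesSMElambdaHmuAi} to the already established Theorem~\ref{thmMixedBasisSlambdaHmu}, Corollary~\ref{corMixedBasisMlambdaHmu}, and Corollary~\ref{corMixedBasisElambdaHmu} by exactly the same unitriangularity argument used in the proof of Corollary~\ref{corMixedBasesSMElambdaPmuAi}. The key observation is the degree hypothesis $\deg(b_{\mu_j})<\mu_j$, which implies
$$
\prod_{j=1}^{\ell(\mu)}(h_{\mu_j}-b_{\mu_j}) = h_\mu + (\text{homogeneous components of total degree strictly less than } |\mu|).
$$
Multiplying by $s_\lambda$, $m_\lambda$, or $e_{\lambda'}$ therefore produces $s_\lambda h_\mu$, $m_\lambda h_\mu$, or $e_{\lambda'}h_\mu$ plus a correction whose homogeneous components each have total degree $<|\lambda|+|\mu|$.

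Next I would pass to the ascending filtration $\mathcal{S}_0\oplus\cdots\oplus\mathcal{S}_i$, each piece of which is a free $\mathcal{R}$-module of finite rank. Restricted to $\{(\lambda,\mu)\,:\,|\lambda|+|\mu|\leq i\}$, each of the three candidate families expands in the corresponding basis from Theorem~\ref{thmMixedBasisSlambdaHmu}, Corollary~\ref{corMixedBasisMlambdaHmu}, or Corollary~\ref{corMixedBasisElambdaHmu} by a unitriangular transition matrix under the partial order
$$
(\lambda,\mu)<(\rho,\tau) \iff |\lambda|+|\mu|<|\rho|+|\tau|,
$$
with $1$'s on the diagonal (the diagonal entry is the coefficient of the "leading" term $s_\lambda h_\mu$ in the expansion, which is $1$). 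The correction terms themselves expand into the known basis by induction on total degree, guaranteeing that the transition really is strictly lower-triangular off the diagonal.

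Applying \cite[Corollary~11.1.19~(e)]{expansions} in each filtered piece then shows that each candidate family restricts to an $\mathcal{R}$-basis of $\mathcal{S}_0\oplus\cdots\oplus\mathcal{S}_i$, and taking the union over $i$ yields an $\mathcal{R}$-basis of $\mathcal{S}$. I do not anticipate a genuine obstacle: the proof is the verbatim $h$-analogue of Corollary~\ref{corMixedBasesSMElambdaPmuAi}, with Theorem~\ref{thmMixedBasisSlambdaHmu}, Corollary~\ref{corMixedBasisMlambdaHmu}, and Corollary~\ref{corMixedBasisElambdaHmu} playing the roles that Corollary~\ref{corMixedBasisSlambdaPmu}, Proposition~\ref{propMixedBasisMlambdaPmu}, and Corollary~\ref{corMixedBasisElambdaPmu} played before, and the only ingredient that actually uses the $h$'s (as opposed to the $p$'s) is the degree condition on $b_{\mu_j}$, which is identical in both settings.
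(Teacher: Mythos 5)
Your proposal matches the paper's proof: the paper establishes the $p$-analogue (Corollary~\ref{corMixedBasesSMElambdaPmuAi}) by precisely the filtration/unitriangularity argument you describe, then cites that argument verbatim for the $h$-version, substituting Theorem~\ref{thmMixedBasisSlambdaHmu}, Corollary~\ref{corMixedBasisMlambdaHmu}, and Corollary~\ref{corMixedBasisElambdaHmu} for their $p$-counterparts. You simply spell out the degree bookkeeping (why $\deg(b_{\mu_j})<\mu_j$ forces the transition to be unitriangular under the total-degree order) that the paper leaves implicit; the route and the appeal to \cite[Corollary~11.1.19~(e)]{expansions} are identical.
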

\begin{proof}
This is identical to Lemma~\ref{corMixedBasesSMElambdaPmuAi} except with Theorem~\ref{thmMixedBasisSlambdaHmu} instead of Corollary~\ref{corMixedBasisSlambdaPmu}, with Corollary~\ref{corMixedBasisMlambdaHmu} instead of Proposition~\ref{propMixedBasisMlambdaPmu}, and with Corollary~\ref{corMixedBasisElambdaHmu} instead of Corollary~\ref{corMixedBasisElambdaPmu}.
\end{proof}

\begin{corollary}
\label{corSMEGrinQuoBasis}
The sets 
\begin{align*}
\{s_\lambda\,&|\,\lambda\in P_{k,n-k}\},
\\
\{m_\lambda\,&|\,\lambda\in P_{k,n-k}\},
\\
\text{and } \{e_{\lambda'}\,&|\,\lambda\in P_{k,n-k}\}
\end{align*}
are $\mathcal{R}$-bases for $\mathcal{S}/\mathcal{I}$.
\end{corollary}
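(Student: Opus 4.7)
The proof is essentially identical to that of Corollary~\ref{corSMEWeinQuoBasis}, just with the $h_i$'s playing the role of the $p_i$'s. My plan is to apply Corollary~\ref{corMixedBasesSMElambdaHmuAi} and split each of the three bases of $\mathcal{S}$ it provides according to whether $\mu$ is empty or not.

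First I would observe that for any $\mu \in Q_{n-k+1,n}$ with $\ell(\mu) \geq 1$, every element of the form $s_\lambda \prod_{j=1}^{\ell(\mu)} (h_{\mu_j}-b_{\mu_j})$ lies in $\mathcal{I}$, since it is a multiple of $h_{\mu_1}-b_{\mu_1}$, which is one of the generators of $\mathcal{I}$. The same holds for the corresponding $m_\lambda$ and $e_{\lambda'}$ versions.

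Next I would argue that in fact the subsets
$$
\left\{\left.s_\lambda \prod_{j=1}^{\ell(\mu)} (h_{\mu_j}-b_{\mu_j}) \,\right|\, \lambda \in P_{k,n-k},\ \mu \in Q_{n-k+1,n},\ \ell(\mu)\geq 1\right\}
$$
(and analogously with $m_\lambda$ or $e_{\lambda'}$) are $\mathcal{R}$-bases for $\mathcal{I}$. Indeed, since these subsets are linearly independent over $\mathcal{R}$ (being subsets of $\mathcal{R}$-bases for $\mathcal{S}$), it suffices to show that they $\mathcal{R}$-span $\mathcal{I}$. Any element of $\mathcal{I}$ is an $\mathcal{S}$-linear combination of the generators $h_{n-k+i}-b_{n-k+i}$; expanding the coefficients using Corollary~\ref{corMixedBasesSMElambdaHmuAi} and collecting, one lands inside the span of the $\ell(\mu)\geq 1$ elements.

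Finally, since each of the three sets from Corollary~\ref{corMixedBasesSMElambdaHmuAi} is an $\mathcal{R}$-basis for $\mathcal{S}$ that decomposes as the disjoint union of its $\mu=\emptyset$ part and its $\ell(\mu)\geq 1$ part, and since the latter part is an $\mathcal{R}$-basis for $\mathcal{I}$, the images of the former part under the quotient map are $\mathcal{R}$-bases for $\mathcal{S}/\mathcal{I}$. The $\mu=\emptyset$ parts are precisely $\{s_\lambda\}_{\lambda\in P_{k,n-k}}$, $\{m_\lambda\}_{\lambda\in P_{k,n-k}}$, and $\{e_{\lambda'}\}_{\lambda\in P_{k,n-k}}$, giving the result. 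Since this entire argument is word-for-word the same as the proof of Corollary~\ref{corSMEWeinQuoBasis} with $h$ replacing $p$ and Corollary~\ref{corMixedBasesSMElambdaHmuAi} replacing Corollary~\ref{corMixedBasesSMElambdaPmuAi}, there is no real obstacle; the write-up can simply say that the proof is identical except for these substitutions.
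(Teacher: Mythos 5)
Your proposal is correct and is essentially the paper's own argument: the paper's proof of this corollary simply cites the proof of Corollary~\ref{corSMEWeinQuoBasis} with Corollary~\ref{corMixedBasesSMElambdaHmuAi} substituted for Corollary~\ref{corMixedBasesSMElambdaPmuAi}, which is exactly the decomposition into the $\mu = \emptyset$ and $\ell(\mu)\geq 1$ parts that you describe. You spell out the ``spans $\mathcal{I}$'' step in a bit more detail than the paper does, but the route is the same.
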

\begin{proof}
This is identical to Corollary~\ref{corSMEWeinQuoBasis} except with Corollary~\ref{corMixedBasesSMElambdaHmuAi} instead of Corollary~\ref{corMixedBasesSMElambdaPmuAi}.
\end{proof}

\begin{remark}
Corollary~\ref{corSMEWeinQuoBasis} and Corollary~\ref{corSMEGrinQuoBasis} prove Corollary~\ref{corMainSME}.
\end{remark}

\section{Conjectures}

We have thus far shown that all of Grinberg's bases for $\mathcal{S}/\mathcal{I}$ are also bases for $\mathcal{S}/\mathcal{J}$ except for the $\{h_\lambda\}_{\lambda\in P_{k,n-k}}$ basis. Here we conjecture two analogs:

\begin{conjecture}
\label{conjMixedBasisHlambdaPmu}
The set
$$
\{ h_\lambda p_\mu\,|\,\lambda\in P_{k,n-k},\ \mu\in Q_{n-k+1,n} \}
$$
is an $\mathcal{R}$-basis for $\mathcal{S}$, and the set
$$
\{ h_\lambda\,|\,\lambda\in P_{k,n-k} \}
$$
is an $\mathcal{R}$-basis for $\mathcal{S}/\mathcal{J}$.
\end{conjecture}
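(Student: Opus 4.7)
The plan is to mirror the strategy of Sections~3 and~5. Set $V_i = \{h_\lambda p_\mu \,|\, \lambda \in P_{k,n-k},\ \mu \in Q_{n-k+1,n},\ |\lambda|+|\mu|=i\}$; the count $\#V_i = \textnormal{rank}_\mathcal{R}(\mathcal{S}_i)$ follows verbatim from Lemma~\ref{lemDimMixedBasisMlambdaPmuWorks}, so it suffices to show that $V_i$ spans $\mathcal{S}_i$ over $\mathcal{R}$.

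The most natural attempt is to prove an invertibly triangular change of basis from $\{s_\lambda p_\mu\}_{(\lambda, \mu) \in P_{k,n-k} \times Q_{n-k+1,n}}$, the basis provided by Corollary~\ref{corMixedBasisSlambdaPmu}, into the proposed $\{h_\lambda p_\mu\}$. One expands $h_\lambda = \sum_{\nu \unrhd \lambda,\ \ell(\nu) \leq k} K_{\nu\lambda}\, s_\nu$ (the dual Kostka expansion, with $K_{\lambda\lambda}=1$) and multiplies by $p_\mu$: for $\nu \in P_{k,n-k}$ the term $s_\nu p_\mu$ is already in the Corollary~\ref{corMixedBasisSlambdaPmu} basis, while for $\nu_1 > n-k$ one must further expand $s_\nu = \sum_{(\rho, \tau)} c^{(\nu)}_{\rho, \tau}\, s_\rho p_\tau$ via that same corollary and concatenate indices to get contributions $s_\rho p_{\tau \cup \mu}$. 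With a suitable well-founded partial order on $(\rho, \sigma)$, one hopes the resulting transition matrix $\{h_\lambda p_\mu\} \to \{s_\rho p_\sigma\}$ is invertibly triangular, whence the first half of the conjecture by \cite[Corollary~11.1.19~(e)]{expansions}.

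The main obstacle lies in choosing that partial order. The secondary expansion of $s_\nu$ (for $\nu \notin P_{k,n-k}$) can produce ``$\tau = \emptyset$'' terms that contribute back to the same pair $(\lambda, \mu)$ or to other pairs $(\rho, \mu)$ with $|\rho|=|\lambda|$, spoiling naive unitriangularity. For instance, with $k=2$, $n=3$ one computes $h_{(1,1)} = 2\, s_{(1,1)} + p_2$, so the diagonal entry of the would-be transition matrix at $((1,1), \emptyset)$ is $2$ rather than $1$. Over a $\mathbb{Q}$-algebra this diagonal remains a unit, but a general proof would need to show both that all such diagonal entries are always units and that all off-diagonal contributions genuinely land below the diagonal in some well-founded order; this seems to require a combinatorial analysis of the coefficients $c^{(\nu)}_{\rho, \tau}$ produced by the algorithm of Section~3 that the author has not carried out, which is why the statement is left as a conjecture.

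Granting the first part, the second follows routinely in parallel with Corollaries~\ref{corMixedBasesSMElambdaPmuAi} and~\ref{corSMEWeinQuoBasis}: replacing each $p_{\mu_j}$ with $p_{\mu_j} - b_{\mu_j}$ yields an invertibly triangular modification in total degree of the basis from part~1, hence itself a basis of $\mathcal{S}$; the elements with $\mu \neq \emptyset$ then span $\mathcal{J}$, and the residues of the elements with $\mu = \emptyset$ furnish the claimed $\mathcal{R}$-basis $\{h_\lambda \,|\, \lambda \in P_{k,n-k}\}$ of $\mathcal{S}/\mathcal{J}$.
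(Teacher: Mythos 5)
This statement is labeled a \emph{conjecture} in the paper, so there is no proof for your proposal to match. You have correctly recognized that and have written an honest analysis of the obstruction rather than a false proof; that is the right call.

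Your diagnosis of the difficulty is accurate. The reason the triangularity argument from Sections~3 and~5 does not carry over is precisely the one you identify: the expansion $h_\lambda = \sum_{\nu \unrhd \lambda} K_{\nu\lambda}\, s_\nu$ goes \emph{up} in the majorization order, so it necessarily produces Schur indices $\nu$ with $\nu_1 > n-k$ that then have to be reduced via the $\{s_\rho p_\tau\}$ basis. That secondary reduction goes back \emph{down} in degree and in majorization, and its contributions can land on the diagonal slot (or on other slots of the same degree), so there is no obvious well-founded order in which the whole change of basis is unitriangular. Your $k=2$, $n=3$ computation $h_{(1,1)} = 2\,s_{(1,1)} + p_2$ is correct (from $h_{(1,1)} = s_{(2)} + s_{(1,1)}$ and $s_{(2)} = s_{(1,1)} + p_2$ in two variables) and shows concretely that the naive diagonal entry is $2$, not $1$. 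Since $\mathcal{S}/\mathcal{J}$ is only defined over a $\mathbb{Q}$-algebra this particular entry is still a unit, but as you say, one would still have to show that all diagonal entries are units and that the remaining terms are controlled by some well-founded order; nothing in the paper's triangularity toolkit (Lemmas~\ref{lemSinMTriangularityPkn-k} and~\ref{lemEinSTriangularityPkn-k}) supplies this, which is why the paper leaves the statement open.

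Your reduction of the second half to the first is correct and is exactly the mechanism of Corollary~\ref{corMixedBasesSMElambdaPmuAi} and Corollary~\ref{corSMEWeinQuoBasis}: once $\{h_\lambda p_\mu\}$ is known to be a basis, replacing each $p_{\mu_j}$ by $p_{\mu_j} - b_{\mu_j}$ gives an invertibly triangular change in total degree, the $\mu \neq \emptyset$ members span $\mathcal{J}$, and the $\mu = \emptyset$ members descend to a basis of $\mathcal{S}/\mathcal{J}$. So the only genuinely open step is the first-half spanning argument, and you have located the precise point at which the existing proof technique breaks.
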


\begin{conjecture}
\label{conjMixedBasisHlambda'Pmu}
The set
$$
\{ h_{\lambda'} p_\mu\,|\,\lambda\in P_{k,n-k},\ \mu\in Q_{n-k+1,n} \}
$$
is an $\mathcal{R}$-basis for $\mathcal{S}$, and the set
$$
\{ h_{\lambda'}\,|\,\lambda\in P_{k,n-k} \}
$$
is an $\mathcal{R}$-basis for $\mathcal{S}/\mathcal{J}$ (note the conjugates).
\end{conjecture}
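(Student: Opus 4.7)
The plan is to follow the template already established for Theorem~\ref{thmMain}: first prove the mixed-basis statement for $\mathcal{S}$, and then deduce the $\mathcal{S}/\mathcal{J}$ statement via the same deformation argument as in Corollary~\ref{corMixedBasesSMElambdaPmuAi} and Corollary~\ref{corSMEWeinQuoBasis}. Set $V_i = \{h_{\lambda'} p_\mu : \lambda \in P_{k,n-k},\ \mu \in Q_{n-k+1,n},\ |\lambda|+|\mu|=i\}$; as in Remark~\ref{rmkStrategyForMixedBasisMlambdaPmu}, it suffices to show both $\#V_i = \textnormal{rank}_\mathcal{R}(\mathcal{S}_i)$ and $\textnormal{span}_\mathcal{R}(V_i) = \mathcal{S}_i$.

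The rank count carries over from Lemma~\ref{lemDimMixedBasisMlambdaPmuWorks} verbatim, since conjugation $\lambda \mapsto \lambda'$ is a size-preserving bijection from $P_{k,n-k}$ onto $P_{n-k,k}$, leaving the generating function $\sum_i q^i \#P_{k,n-k}(i)$ unchanged and the rest of the calculation leading to $\sum_i q^i \#V_i = \prod_{j=1}^k \frac{1}{1-q^j}$ intact. For the spanning I would induct on $i$, starting from the $\mathcal{R}$-basis $\{h_\nu : \nu' \in P_k\}$ of $\mathcal{S}$: when $\ell(\nu) \leq n-k$, taking $\lambda = \nu'$ places $h_\nu$ directly in $V_i$, so the essential case is $\ell(\nu) \geq n-k+1$, where a length-reducing identity is required to trade an $h$-factor for a power sum $p_m$ with $m \in [n-k+1,n]$.

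This length reduction is the crux of the difficulty. Unlike the Newton-Girard reductions of Lemma~\ref{lemReducingPViaNewtonGirard} and Lemma~\ref{lemReducingHViaNewtonGirard}, which shrink a single oversized factor but preserve the total number of factors, here we must shrink the \emph{number} of factors in a product of small-index $h_i$'s (all with $\nu_j \leq k$). A natural candidate is the identity $m\,h_m = p_m + p_1 h_{m-1} + \cdots + p_{m-1} h_1$, which when solved for $h_{\nu_j}$ trades that factor for a $p_{\nu_j}$ plus terms of the same $h$-length; since $\nu_j \leq k$ generally does not lie in $[n-k+1,n]$, the emergent $p_{\nu_j}$ must then be re-expanded in the $h$-basis via Newton-Girard and the process iterated, hoping for a telescoping that drives the $h$-length below $n-k$. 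Small cases support the conjecture (e.g., $h_1^4 = 2h_2^2 - p_4$ for $k=2,\ n=4$, which follows immediately from $p_4 = -h_1^4 + 2h_2^2$), but finding an algorithm with a clean monovariant seems to require genuinely new combinatorics. An alternative approach is to pass through the basis $\{e_{\lambda'} p_\mu\}$ of Corollary~\ref{corMixedBasisElambdaPmu} via the $\mathcal{R}$-algebra automorphism $\sigma:\mathcal{S}\to\mathcal{S}$ sending $e_i \mapsto h_i$ (which is well-defined since $\mathcal{R}[e_1,\ldots,e_k] = \mathcal{R}[h_1,\ldots,h_k] = \mathcal{S}$): this sends the known basis to $\{h_{\lambda'} \sigma(p_\mu)\}$, which is automatically a basis, and would yield the conjecture if $\{h_{\lambda'} \sigma(p_\mu)\}$ expanded invertibly triangularly in $\{h_{\lambda'} p_\mu\}$ under some ordering. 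Unfortunately small computations already show that the correction $\sigma(p_j) - p_j$ is not a combination of strictly ``lower-order'' basis elements in any obvious partial order (for instance, at $k=2,\ n=4$ one finds $\sigma(p_3) = p_3 - 3h_1 p_2$, and $h_1 p_2$ has the same total degree as $p_3$), so a triangularity argument will require a subtler choice of ordering than the ones used in the paper. Once the $\mathcal{S}$-basis is established, however, the $\mathcal{S}/\mathcal{J}$ statement follows by the verbatim analogue of Corollary~\ref{corMixedBasesSMElambdaPmuAi} and Corollary~\ref{corSMEWeinQuoBasis}.
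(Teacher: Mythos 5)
The statement you were asked to prove is Conjecture~\ref{conjMixedBasisHlambda'Pmu}, which the paper leaves open; there is no proof in the paper to compare against, so the only meaningful review is of the internal correctness of what you wrote. You have not proved the statement — you say so yourself — but everything you do claim is correct, and your diagnosis of the obstacle is accurate.

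Two points deserve emphasis. First, the rank count does carry over exactly as you say: conjugation is a size-preserving involution of partitions, $\lambda\in P_{k,n-k}\iff\lambda'\in P_{n-k,k}$, so $\sum_i q^i\#\{\lambda':\lambda\in P_{k,n-k}(i)\}=\sum_i q^i\#P_{k,n-k}(i)=\bigl[\begin{smallmatrix}n\\k\end{smallmatrix}\bigr]_q$, and the computation of Lemma~\ref{lemDimMixedBasisMlambdaPmuWorks} goes through unchanged. Second, your identification of the spanning step as the genuine difficulty is exactly right, and the reason is structural: in Lemmas~\ref{lemMixedBasisMPContainsMlambda} and~\ref{lemMixedBasisSHContainsSlambda} the basis of $\mathcal{S}_i$ being reduced is indexed by $P_k(i)$, so a troublesome element has a single oversized part $\lambda_1>n-k$ that Newton--Girard can shrink without changing the number of factors. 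Here the basis is $\{h_\nu:\nu'\in P_k\}$, every part of $\nu$ satisfies $\nu_j\le k$, and the trouble is $\ell(\nu)>n-k$; what is needed is a rewriting that decreases the \emph{number} of $h$-factors, which is a qualitatively different kind of recursion. Your proposed use of $m\,h_m=\sum_{j=1}^m p_j h_{m-j}$ trades a factor for a low-index $p_j$ that then must be re-expanded and offers no obvious monovariant, and your automorphism $\sigma:e_i\mapsto h_i$ (well-defined since $\mathcal{R}[e_1,\dots,e_k]=\mathcal{R}[h_1,\dots,h_k]=\mathcal{S}$ with both generating sets algebraically independent) gives a basis $\{h_{\lambda'}\sigma(p_\mu)\}$ for free but, as your $k=2,n=4$ calculation $\sigma(p_3)=p_3-3h_1p_2$ correctly shows, the correction $\sigma(p_\mu)-p_\mu$ is not lower in any of the orderings the paper uses, so Corollary~11.1.19 of \cite{expansions} cannot be invoked. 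So the gap is real, it is the gap the paper itself leaves, and your attempt correctly locates it without introducing any errors.
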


\section{Acknowledgements}

The author would like to thank Guangyi Yue for her guidance and mentorship and Darij Grinberg for proposing this project and making helpful suggestions.  The author would also like to thank Yongyi Chen and Tanya Khovanova for reviewing this paper and the MIT PRIMES program, particularly Pavel Etingof and Slava Gerovitch, for making this research possible.

\end{document}